\newtheorem{theorem}{Theorem}[section]
\newtheorem{lemma}[theorem]{Lemma}
\newtheorem{observation}[theorem]{Observation}
\newtheorem{proposition}[theorem]{Proposition}
\newtheorem{conjecture}[theorem]{Conjecture}
\newtheorem{claim}[theorem]{Claim}
\newtheorem{question}[theorem]{Question}
\newtheorem{corollary}[theorem]{Corollary} 
\theoremstyle{definition}
\newtheorem{definition}[theorem]{Definition}
\theoremstyle{remark}
\newtheorem{remark}[theorem]{Remark}
\newtheorem{example}[theorem]{Example}
\newtheorem{case1}{Case}
\numberwithin{subcase}{case1}
\numberwithin{subcase2}{case2}
\renewcommand*\env@matrix[1][*\c@MaxMatrixCols c]{%
  \hskip -\arraycolsep
  \let\@ifnextchar\new@ifnextchar
  \array{#1}}
\DeclareMathOperator{\core}{core}
\subjclass[2000]{68R15, 68R05, 05A05}
\keywords{injectivity problem, subword, $M$\!-equivalence, rational polyhedra, periodic sequence}
\begin{document}
\title[\textit{M}\!-Ambiguity Sequences for Parikh Matrices]{\textit{M}\!-Ambiguity Sequences for Parikh Matrices and Their Periodicity Revisited}

\author{Ghajendran Poovanandran}
\address{School of Mathematical Sciences\\
Universiti Sains Malaysia\\
11800 USM\linebreak
Malaysia}
\address{School of Mathematics, Actuarial and Quantitative Studies\\
Asia Pacific University of Technology \& Innovation\\
Technology Park Malaysia, Bukit Jalil\\
57000 Kuala Lumpur, Malaysia}
\email{ghajendran@staffemail.apu.edu.my}

\author{Wen Chean Teh}
\address{School of Mathematical Sciences\\
Universiti Sains Malaysia\\
11800 USM\linebreak
Malaysia}
\email[Corresponding author]{dasmenteh@usm.my}

\begin{abstract}
The introduction of Parikh matrices by Mateescu et al.\! in 2001 has sparked numerous new investigations in the theory of formal languages by various researchers, among whom is \c{S}erb\v{a}nu\c{t}\v{a}. Recently, a decade-old conjecture by \c{S}erb\v{a}nu\c{t}\v{a} on the $M$\!-ambiguity of words was disproved, leading to new possibilities in the study of such words. In this paper, we investigate how selective repeated duplications of letters in a word affect the $M$\!-ambiguity of the resulting words. The corresponding $M$\!-ambiguity of those words are then presented in sequences, which we term as $M$\!-ambiguity sequences. We show that nearly all patterns of $M$\!-ambiguity sequences are attainable. Finally, by employing certain algebraic approach and some underlying theory in integer programming, we show that repeated periodic duplications of letters of the same type in a word results in an $M$\!-ambiguity sequence that is eventually periodic.
\end{abstract}

\maketitle

\section{Introduction}
The classical Parikh Theorem \cite{rP66}, which states that the Parikh vectors of all words from a context-free language form a semilinear set, established the Parikh mapping as a significant advancement in the theory of formal languages. The Parikh matrix mapping, introduced in \cite{MSSY01}, is a canonical generalization of the Parikh mapping. On top of dealing with the number of occurrences of individual letters (as in the case of Parikh vectors), the Parikh matrix of a word stores information on the number of occurrences of certain subwords in that word as well. The introduction of Parikh matrices has led to various new studies in the combinatorial study of words (for example, see \cite{aA07,AAP08,vS09,SS06,wT16,wT16b,aS10,SY10,AT16,GT16a,GT17,GT17b,MBS17,TSB18,GT17c,TK14}).

A word is $M$\!-ambiguous if and only if it shares the same Parikh matrix with another distinct word. In the pursuit of characterizing $M$\!-unambiguous words, \c{S}erb\v{a}nu\c{t}\v{a} proposed a conjecture in \cite{SS06} that the duplication of any letter in an $M$\!-ambiguous word will result in another $M$\!-ambiguous word. The conjecture was however overturned in \cite{GT17b} by a counterexample from the quaternary alphabet. 

In this work, we will show that by duplicating certain letters in a word, it is possible to continuously change the $M$\!-ambiguity of the resulting words. In fact, we will see that such changes in the $M$\!-ambiguity of a word can occur in nearly any pattern. Given an infinite sequence of words, obtained by repeatedly duplicating certain letters in the first word, we present the corresponding \mbox{$M$\!-ambiguity} of those words in what we term as an $M$\!-ambiguity sequence. This work also proposes an algebraic way to determine the $M$\!-ambiguity of a word. This algebraic approach is then used together with some underlying theory in integer linear programming to show that if we repeatedly duplicate---in a periodic manner---the letters of the same type in a word, the corresponding $M$\!-ambiguity sequence will be eventually periodic. 

The remainder of this paper is structured as follows. Section 2 provides the basic terminology and preliminaries. Section 3 highlights some previous results pertaining to the overturn of \c{S}erb\v{a}nu\c{t}\v{a}'s conjecture and serves the main motivation of this paper. After that, the central notion of our study, namely the $M$\!-ambiguity sequences, is introduced. It is then shown that nearly any pattern of $M$\!-ambiguity sequence can be realized. Section 4 mainly studies the periodicity of $M$\!-ambiguity sequences. In relative to that, an algebraic analysis to determine the $M$\!-ambiguity of a word is illustrated. Certain theories pertaining to rational polyhedra are then used together with the algebraic approach to prove a main result on the periodicity of $M$\!-ambiguity sequences. Our conclusion follows after that.

\section{Preliminaries}
We denote as follows---$\mathbb{R}$ is the set of real numbers, $\mathbb{Q}$ is the set of rational numbers, $\mathbb{Z}$ is the set of integers, $\mathbb{Z^+}$ is the set of positive integers and $\mathbb{Z}_{\ge 0}$ is the set of nonnegative integers.

Suppose $\Sigma$ is a finite nonempty alphabet. The set of all words over $\Sigma$ is denoted by $\Sigma^*$. The unique empty word is denoted by $\lambda$. Given two words $v,w\in\Sigma^*$, the concatenation of $v$ and $w$ is denoted by  $vw$. An \emph{ordered alphabet} is an alphabet $\Sigma=\{a_1, a_2, \dotsc,a_s\}$ with a total ordering on it. For example, if $a_1<a_2<\dotsb < a_s$, then we may write $\Sigma= \{a_1<a_2<\dotsb<a_s\}$. Conversely, if $\Sigma= \{a_1<a_2<\dotsb<a_s\}$, then $\{a_1, a_2, \dotsc,a_s\}$ is the \textit{underlying alphabet}. Frequently, we will abuse notation and use $\Sigma$ to stand for both the ordered alphabet and its underlying alphabet. Suppose $\Gamma\subseteq\Sigma$. The projective morphism $\pi_{\Gamma}:\Sigma^*\rightarrow\Gamma^*$ is defined by 
\begin{equation*}
\pi_{\Gamma}(a)=\begin{cases}
a, & \text{if } a\in\Gamma\\
\lambda, & \text{otherwise.}
\end{cases}
\end{equation*}

A word $v$ is a \emph{scattered subword} (or simply \emph{subword}) of $w\in \Sigma^*$ if and only if there exist $x_1,x_2,\dotsc, x_n$, $y_0, y_1, \dotsc,y_n\in \Sigma^*$ (possibly empty) such that $v=x_1x_2\dotsm x_n$ and  $w=y_0x_1y_1\dotsm y_{n-1}x_ny_n$. The number of occurrences of a word $v$ as a subword of $w$ is denoted by $\vert w\vert_v$. Two occurrences of $v$ are considered different if and only if they differ by at least one position of some letter. For example, $\vert bcbcc\vert_{bc}=5$ and $\vert aabcbc\vert_{abc}=6$.
By convention, $\vert w\vert_{\lambda}=1$ for all $w\in \Sigma^*$.
%The reader is referred to \cite{RS97} for language theoretic notions not detailed here.

For any integer $k\geq 2$, let $\mathcal{M}_k$ denote the multiplicative monoid of $k \times k$ upper triangular matrices with nonnegative integral entries and unit diagonal.

\begin{definition} \cite{MSSY01}
Suppose $\Sigma=\{a_1<a_2<\cdots <a_k\}$ is an ordered alphabet. The \textit{Parikh matrix mapping} with respect to $\Sigma$, denoted by $\Psi_\Sigma$, is the morphism:
\begin{equation*}
\Psi_\Sigma:\Sigma^*\rightarrow\mathcal{M}_{k+1},
\end{equation*}
defined such that for every integer $1\le q\le k$, if $\Psi_\Sigma(a_q)=(m_{i,j})_{1\le i,j\le k+1}$, then
\begin{itemize}
\item $m_{i,i}=1$ for all $1\le i\le k+1$;
\item $m_{q,q+1}=1$; and 
\item all other entries of the matrix $\Psi_\Sigma(a_q)$ are zero.
\end{itemize} 
Matrices of the form $\Psi_\Sigma(w)$ for $w\in\Sigma^*$ are termed as  \textit{Parikh matrices}.
\end{definition}

\begin{theorem}\cite{MSSY01}\label{1206a}
Suppose $\Sigma=\{a_1<a_2< \dotsb<a_s\}$ is an ordered alphabet and $w\in \Sigma^*$. The matrix $\Psi_{\Sigma}(w)=(m_{i,j})_{1\leq i,j\leq s+1}$ has the following properties:
\begin{itemize}
\item $m_{i,i}=1$ for each $1\leq i \leq s+1$;
\item $m_{i,j}=0$ for each $1\leq j<i\leq s+1$;
\item $m_{i,j+1}=\vert w \vert_{a_ia_{i+1}\dotsm a_j    }$ for each $1\leq i\leq j \leq s$.
\end{itemize}
\end{theorem}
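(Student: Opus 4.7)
The plan is to prove all three assertions simultaneously by induction on $|w|$. For the base case $w = \lambda$, the morphism condition forces $\Psi_\Sigma(\lambda) = I_{s+1}$; the diagonal entries are $1$, the strictly lower-triangular entries are $0$, and for $1 \le i \le j \le s$ the entry $m_{i,j+1} = 0$ agrees with $|\lambda|_{a_i \cdots a_j} = 0$. Since $\mathcal{M}_{s+1}$ consists of upper triangular matrices with unit diagonal and is closed under multiplication, the first two properties propagate through the induction automatically; the real work is therefore confined to the strictly upper-triangular entries.

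For the inductive step, I would pick a factorization $w' = w a_q$ with $1 \le q \le s$ and exploit the identity $\Psi_\Sigma(w') = \Psi_\Sigma(w)\,\Psi_\Sigma(a_q)$ coming from the morphism property. Because $\Psi_\Sigma(a_q)$ equals $I_{s+1}$ plus a single $1$ in position $(q, q+1)$, right-multiplication by $\Psi_\Sigma(a_q)$ leaves every column of $\Psi_\Sigma(w)$ unchanged except column $q+1$, to which column $q$ is added. Writing $\Psi_\Sigma(w) = (m_{i,j})$ and $\Psi_\Sigma(w') = (m'_{i,j})$, this yields $m'_{i,j} = m_{i,j}$ when $j \ne q+1$, and $m'_{i,q+1} = m_{i,q+1} + m_{i,q}$.

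The remaining step is to match this formula against the combinatorial recurrence
\begin{equation*}
|wa_q|_{a_i\cdots a_j} = |w|_{a_i\cdots a_j} + [\,j = q\,]\cdot |w|_{a_i\cdots a_{q-1}},
\end{equation*}
which follows by classifying each occurrence of $a_i\cdots a_j$ in $wa_q$ according to whether its final letter is the appended $a_q$: if not, it is an occurrence in $w$; if so, one must have $a_j = a_q$, and the preceding $j-i$ letters form an occurrence of $a_i \cdots a_{q-1}$ in $w$. The step I expect to be trickiest is the degenerate case $i = q = j$, where ``$a_i \cdots a_{q-1}$'' should be read as the empty word and one relies on the convention $|w|_\lambda = 1$ to get the count to increment by exactly $1$. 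Once this boundary is reconciled, the matrix recurrence and the subword recurrence line up on both the $j = q$ and $j \ne q$ branches, closing the induction.
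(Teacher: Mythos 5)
Your proof is correct. The paper itself imports this theorem from Mateescu et al.\ without reproducing a proof, and your induction on $|w|$ --- using the morphism property to reduce to right-multiplication by $I+E_{q,q+1}$ and matching it against the subword recurrence $|wa_q|_{a_i\cdots a_j}=|w|_{a_i\cdots a_j}+[\,j=q\,]\,|w|_{a_i\cdots a_{q-1}}$, with the convention $|w|_\lambda=1$ handling the case $i=j=q$ --- is exactly the standard argument from the original source.
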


\begin{example}
Suppose $\Sigma=\{a<b<c<d\,\}$ and $w=abcdbc$.
Then \begin{align*}
\Psi_{\Sigma}(w)&=\Psi_{\Sigma}(a)\Psi_{\Sigma}(b)\Psi_{\Sigma}(c)\Psi_{\Sigma}(d)\Psi_{\Sigma}(b)\Psi_{\Sigma}(c)\\
&= \begin{pmatrix}
1 & 1 & 0 & 0 & 0\\
0 & 1 & 0 & 0 & 0\\
0 & 0 & 1 & 0 & 0\\
0 & 0 & 0 & 1 & 0\\
0 & 0 & 0 & 0 & 1
\end{pmatrix}
\begin{pmatrix}
1 & 0 & 0 & 0 & 0\\
0 & 1 & 1 & 0 & 0\\
0 & 0 & 1 & 0 & 0\\
0 & 0 & 0 & 1 & 0\\
0 & 0 & 0 & 0 & 1
\end{pmatrix}
\dotsm
\begin{pmatrix}
1 & 0 & 0 & 0 & 0\\
0 & 1 & 0 & 0 & 0\\
0 & 0 & 1 & 1 & 0\\
0 & 0 & 0 & 1 & 0\\
0 & 0 & 0 & 0 & 1
\end{pmatrix}\\
&= \begin{pmatrix}
1 & 1 & 2 & 3 & 1\\
0 & 1 & 2 & 3 & 1\\
0 & 0 & 1 & 2 & 1\\
0 & 0 & 0 & 1 & 1\\
0 & 0 & 0 & 0 & 1
\end{pmatrix}
=\begin{pmatrix}
1 & \vert w\vert_a & \vert w\vert_{ab} & \vert w\vert_{abc} & \vert w\vert_{abcd}\\
0 &1 & \vert w\vert_b & \vert w\vert_{bc} & \vert w\vert_{bcd}\\
0 & 0 & 1 & \vert w\vert_c & \vert w\vert_{cd}\\
0 & 0 & 0 & 1 & \vert w\vert_{d}\\
0 & 0 & 0 & 0 & 1
\end{pmatrix}.
\end{align*}
\end{example}

\begin{definition}
Suppose $\Sigma$ is an ordered alphabet.
Two words $w,w'\in \Sigma^*$ are \emph{$M$\!-equivalent}, denoted by $w\equiv_Mw'$, iff $\Psi_{\Sigma}(w)=\Psi_{\Sigma}(w')$.
A word $w\in \Sigma^*$ is \emph{$M$\!-ambiguous} iff it is $M$\!-equivalent to another distinct word. Otherwise, $w$ is \mbox{\emph{$M$\!-unambiguous}}. For any word $w\in\Sigma^*$, we denote by $C_w$ the set of all words
that are $M$\!-equivalent to $w$.
\end{definition}

The following is a simple equivalence relation which involves the most evident rewriting rules that preserve $M$\!-equivalence (see \cite{AAP08}).

\begin{definition}\label{1906a}
Suppose $\Sigma=\{a_1<a_2< \dotsb<a_s\}$ is an ordered alphabet. Two words $w,w'\in\Sigma^*$ are \mbox{\emph{$1$-equivalent}}, denoted by $w\equiv_1w'$, iff $w'$ can be obtained from $w$ by applying finitely many rewriting rules of the following form:
\begin{equation*}
xa_ka_ly\rightarrow xa_la_k y \text{ where } x,y\in\Sigma^* \text{ and } \vert k-l\vert \geq 2.
\end{equation*}
\end{definition}

\begin{definition}\cite{TK14}
Suppose $\Sigma$ is an alphabet and $v,w\in\Sigma^*$. The \textit{v-core} of $w$, denoted by $\core_v(w)$, is the unique subword $w'$ of $w$ such that $w'$ is the subword of shortest length which satisfies $|w'|_v=|w|_v$.
\end{definition}

\begin{proposition}\label{CoreabcMiddle}\cite{GT16a}
Suppose $\Sigma=\{a<b<c\}$ and $w\in\Sigma^*$ with $|w|_{abc}\ge 1$. Then, $w\equiv_1u\core_{abc}(w)v$ for some unique $u\in\{b,c\}^*$ and $v\in\{a,b\}^*$.
\end{proposition}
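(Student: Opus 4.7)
The plan is to normalize $w$ by explicit $1$-equivalence rewrites and read off $u$ and $v$ from the resulting factorization. First, I would write $w = x\,y\,z$, where $x$ is the longest prefix of $w$ with no $a$, $z$ is the longest suffix of $w$ with no $c$, and $y$ is the remainder. The hypothesis $|w|_{abc}\ge 1$ forces $x$ to end strictly before $z$ begins, so $y$ is nonempty, begins with the first $a$ of $w$ and ends with its last $c$, and contains every $b$ of $w$ that occurs between them. By construction $x\in\{b,c\}^*$ and $z\in\{a,b\}^*$, and the $b$'s of $y$ split it as $y = \alpha_0\, b\, \alpha_1\, b\, \cdots\, b\, \alpha_m$ with each $\alpha_i\in\{a,c\}^*$ and $m\ge 1$.

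Next, I would identify $\core_{abc}(w)$ as the subword of $w$ consisting of exactly those letters that lie in at least one $abc$-triple. Inspection of the decomposition shows these are precisely: every $b$ in $y$, the $a$'s of $\alpha_0,\dotsc,\alpha_{m-1}$ (those before the last $b$ of $y$), and the $c$'s of $\alpha_1,\dotsc,\alpha_m$ (those after the first $b$ of $y$). A short counting argument, carried out separately for each letter class, verifies that any subword of $w$ with $|w'|_{abc}=|w|_{abc}$ must retain all of these positions (otherwise some triple is destroyed), so
\[
\core_{abc}(w) \;=\; \pi_{\{a\}}(\alpha_0)\, b\, \alpha_1\, b\, \cdots\, b\, \alpha_{m-1}\, b\, \pi_{\{c\}}(\alpha_m).
\]
Setting $u := x\,\pi_{\{c\}}(\alpha_0)\in\{b,c\}^*$ and $v := \pi_{\{a\}}(\alpha_m)\,z\in\{a,b\}^*$, the word $u\,\core_{abc}(w)\,v$ differs from $w$ only in that the $\{a,c\}^*$-blocks $\alpha_0$ and $\alpha_m$ have been replaced by their sorted forms $\pi_{\{c\}}(\alpha_i)\pi_{\{a\}}(\alpha_i)$. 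Since $\equiv_1$ permits adjacent swaps of $a$ and $c$, every word in $\{a,c\}^*$ is $\equiv_1$-equivalent to such a sorted form, and existence follows.

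For uniqueness, I would use that $\equiv_1$ preserves both $\pi_{\{a,b\}}$ and $\pi_{\{b,c\}}$, since its sole rewrite rule only swaps an $a$ with a $c$. If $u_1\,\core_{abc}(w)\,v_1 \equiv_1 u_2\,\core_{abc}(w)\,v_2$ with $u_i\in\{b,c\}^*$ and $v_i\in\{a,b\}^*$, then applying $\pi_{\{a,b\}}$ gives
\[
b^{|u_1|_b}\,\pi_{\{a,b\}}(\core_{abc}(w))\,v_1 \;=\; b^{|u_2|_b}\,\pi_{\{a,b\}}(\core_{abc}(w))\,v_2.
\]
Because $\pi_{\{a,b\}}(\core_{abc}(w))$ starts with $a$, the leading $b$-blocks must match in length, forcing $|u_1|_b=|u_2|_b$ and then $v_1=v_2$; a symmetric argument using $\pi_{\{b,c\}}$ gives $u_1=u_2$. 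The main obstacle I expect lies in the explicit identification of $\core_{abc}(w)$: one must carefully verify that no overlapping cancellation permits a further-shortened subword to retain the count, which reduces to ruling out, letter-type by letter-type, the removal of any position belonging to some $abc$-triple.
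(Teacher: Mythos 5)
Your argument is correct. Note that the paper itself imports this proposition from \cite{GT16a} without proof, so there is no in-text proof to compare against; judged on its own terms, your normalization goes through. The decomposition $w=xyz$ is well defined precisely because $|w|_{abc}\ge 1$ places the first $a$ before the last $c$, and this also gives $m\ge 1$ and forces $\alpha_0$ to begin with $a$ and $\alpha_m$ to end with $c$, which is what makes the core start with $a$ and end with $c$ --- the facts your uniqueness argument silently relies on. The only step you leave as a sketch is the identification of $\core_{abc}(w)$ with the set of positions lying in at least one occurrence of $abc$; this is exactly the characterization underlying the core construction of \cite{TK14} (omitting a participating position destroys at least one occurrence and creates none, while keeping all of them preserves every occurrence), and your letter-by-letter inspection of which positions participate is accurate, including the asymmetric treatment of $\alpha_0$ (keep only its $a$'s) and $\alpha_m$ (keep only its $c$'s). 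Finally, since over $\{a<b<c\}$ the relation $\equiv_1$ swaps only adjacent $a$ and $c$, it does preserve both $\pi_{\{a,b\}}$ and $\pi_{\{b,c\}}$, so the existence step (sorting the $\{a,c\}$-blocks $\alpha_0$ and $\alpha_m$ in place) and the uniqueness step (stripping the maximal $b$-prefix and $b$-suffix of the two projections) are both sound.
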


\section{Attainable Patterns of \textit{M}\!-ambiguity Sequences}

The following conjecture was proposed by \c{S}erb\v{a}nu\c{t}\v{a} in \cite{SS06} as an open \mbox{problem} pertaining to $M$\!-ambiguity of words.

\begin{conjecture}\label{SerbConj}
Suppose $\Sigma$ is an ordered alphabet. For any $u,v\in\Sigma^*$ and $a\in\Sigma$, if $uaav$ is $M$\!-unambiguous, then $uav$ is $M$\!-unambiguous as well. Equivalently, if $uav$ is $M$\!-ambiguous, then $uaav$ is also $M$\!-ambiguous.
\end{conjecture}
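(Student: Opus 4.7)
The plan is to promote any witness $w'\in C_{uav}\setminus\{uav\}$ to a witness $w''\in C_{uaav}\setminus\{uaav\}$ by inserting a single copy of $a$ at a suitable position of $w'$. First I would dispatch the small-alphabet case directly. When $|\Sigma|\le 3$, $M$-equivalence is known to coincide with $1$-equivalence in the sense of Definition~\ref{1906a}, so $w'$ can be reached from $uav$ by a finite chain of swaps $xa_ka_ly\to xa_la_ky$ with $|k-l|\ge 2$. Inducting on the length of this chain, I would verify that each such swap commutes with the designated duplication: if $a\notin\{a_k,a_l\}$ the swap is untouched after the extra $a$ is inserted, and if $a\in\{a_k,a_l\}$ the new copy of $a$ can be positioned so that the same swap (possibly after one harmless preparatory $1$-step) remains available. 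This would settle the conjecture for alphabet size at most three.

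For general alphabets I would then attempt a direct algebraic attack using Theorem~\ref{1206a}. The increment $\Psi(uaav)-\Psi(uav)$ is an explicit upper-triangular matrix whose $(i,j+1)$-entry depends only on the letter $a=a_q$ and on certain prefix/suffix subword counts of $u$ and $v$ at the insertion point. Inserting $a$ into $w'$ at any position $p$ produces an increment of exactly the same form, now governed by the analogous prefix/suffix counts of $w'$ at $p$. Because $\Psi(w')=\Psi(uav)$, the \emph{aggregate} counts of every relevant subword agree between $w'$ and $uav$; the goal is therefore to locate a position $p$ of $w'$ whose \emph{positional} prefix/suffix counts match those at the middle $a$ of $uav$, and then to confirm that the resulting $w''$ is genuinely distinct from $uaav$ (for which one can fall back on the known distinction $w'\ne uav$ after checking that no two admissible insertions collide).

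The step I expect to be the real obstacle is precisely this matching of positional counts. A Parikh matrix records only aggregate subword data, so a priori there is no reason an $M$-equivalent word need admit an insertion point with the required local profile. For $|\Sigma|\le 3$ the $v$-core machinery (Proposition~\ref{CoreabcMiddle}) effectively recovers enough positional information to force the match, which is why the reduction via $1$-equivalence succeeds. For $|\Sigma|\ge 4$ I see no structural invariant on $C_{uav}$ that pins down a usable insertion point, and I would expect the conjecture to either require a new invariant refining the Parikh matrix or, failing that, to be refuted by a carefully chosen quaternary example; I would therefore invest parallel effort in searching among short quaternary words for a pair $uav$, $w'$ with $\Psi(uav)=\Psi(w')$ for which no insertion of $a$ into $w'$ yields a Parikh matrix equal to $\Psi(uaav)$.
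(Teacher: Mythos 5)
This statement is a \emph{conjecture}, and the paper's point in recalling it is that it is \textbf{false}: as stated immediately after it, the conjecture was disproved in the quaternary case in the reference [Teh--Atanasiu--Poovanandran, \emph{Theoret.\ Comput.\ Sci.}\ 719 (2018)]. The word $uav=cbcbab\,c\,dcbabcbc$ over $\{a<b<c<d\}$ is $M$\!-ambiguous (it is $M$\!-equivalent to $bccabcbdbcbaccb$), yet $uaav=cbcbab\,cc\,dcbabcbc$ is $M$\!-unambiguous --- this is Theorem~\ref{ConjectureOverturn} with $n=2$. So no correct proof of the statement can exist, and your proposal necessarily breaks down. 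To your credit, you located the break almost exactly: the Parikh matrix records only aggregate subword counts, an $M$\!-equivalent witness $w'$ need not admit an insertion point with the right positional profile, and you explicitly flagged that the conjecture might instead be refuted by a short quaternary example. That hedge is the correct outcome, and you should have pursued it rather than the proof.

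There is also a concrete error in your small-alphabet argument. $M$\!-equivalence does \emph{not} coincide with $1$-equivalence for $|\Sigma|\le 3$. For the binary alphabet $\{a<b\}$ the relation $\equiv_1$ of Definition~\ref{1906a} is vacuous (there is no pair of letters at distance $\ge 2$), yet $M$\!-ambiguous binary words exist, e.g.\ $abba\equiv_M baab$; similarly, for ternary alphabets $\equiv_1$ only permits the swap $ac\leftrightarrow ca$ and is strictly coarser than $\equiv_M$. The binary and ternary cases of the conjecture are true, but for a different reason: they follow from the exhaustive characterizations of $M$\!-unambiguous words in those alphabets (Theorem~3 of Mateescu--Salomaa and Theorem~A.1 of \c{S}erb\v{a}nu\c{t}\v{a}--\c{S}erb\v{a}nu\c{t}\v{a}), which is how the paper justifies them. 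Your chain-of-swaps induction therefore does not get off the ground even where the conclusion happens to hold.
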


The above conjecture holds for the binary and ternary alphabets. (For exhaustive lists of $M$\!-unambiguous binary and ternary words, readers are referred to \cite[Theorem~3]{MS04} and \cite[Theorem~A.1]{SS06} respectively.)
On the contrary, for the quarternary alphabet, it was shown in \cite{GT17b} that the conjecture is invalid. The counterexample given was the $M$\!-ambiguous word $cbcbabcdcbabcbc$ (which is \mbox{$M$\!-equivalent} to the word $bccabcbdbcbaccb$). The following result was then proven, thus overturning the conjecture.

\begin{theorem}\cite{GT17b}\label{ConjectureOverturn}
The word $w=cbcbabc^ndcbabcbc$ is $M$\!-unambiguous with respect to $\Sigma=\{a<b<c<d\}$ for every integer $n>1$.
\end{theorem}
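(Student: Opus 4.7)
The plan is to take any $w' \equiv_M w$ and show $w' = w$. Since $|w|_d = 1$, the word $w'$ contains exactly one $d$, so we may write $w = w_1\,d\,w_2$ and $w' = w_1'\,d\,w_2'$ with $w_1 = cbcbabc^n$, $w_2 = cbabcbc$, and all four factors in $\{a,b,c\}^*$. The four entries of $\Psi_\Sigma(w)$ that pass through the unique $d$---namely $|w|_d$, $|w|_{cd}$, $|w|_{bcd}$, $|w|_{abcd}$---reduce to the single-side identities $|w_1'|_c = n+2$, $|w_1'|_{bc} = 3n+1$, $|w_1'|_{abc} = n$, whereas the remaining six entries of $\Psi_\Sigma(w)$ couple $w_1'$ and $w_2'$.

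First I would show $|w_1'|_a = |w_2'|_a = 1$: the case $|w_1'|_a = 0$ contradicts $|w_1'|_{abc} = n \geq 1$, while $|w_2'|_a = 0$ leads, via the joint identities for $|w|_{abc}$ and $|w|_{bc}$, to $3 \mid 17$, which is false. Writing $w_1' = \alpha\,a\,\beta$ and $w_2' = \alpha'\,a\,\beta'$ with $\alpha,\beta,\alpha',\beta' \in \{b,c\}^*$ and setting $p = |\alpha|_b$, $q = |\alpha|_c$, $r = |\beta|_b$, $s = |\beta|_c$, direct expansion of the constraints gives $|\beta|_{bc} = n$ and $|\alpha|_{bc} + ps = 2n+1$; the parallel identities for $w_2'$ yield $|\alpha'|_b = 6 - 2p - r$, $|\beta'|_b = p$, and $|\beta'|_{bc} = 3(p-1)$.

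The crux---and the only place the hypothesis $n > 1$ enters---is the bound $|\alpha|_{bc} \leq p\,q = p(n+2-s)$, which combined with $|\alpha|_{bc} = 2n+1-ps$ yields $p \geq (2n+1)/(n+2)$; for $n \geq 2$ this forces the integer $p \geq 2$. Together with $|\alpha'|_b \geq 0$ it pins $p = 2$ and $r \in \{0,1,2\}$. The case $r = 0$ is killed by $|\beta|_{bc} = n \geq 1$, while $r = 2$ forces $|\alpha'|_b = 0$ and $|\beta'|_{bc} = 3$, whence $|w'|_{bc} = 3n+16 \neq 3n+15 = |w|_{bc}$.

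With $(p,r) = (2,1)$ the letter counts and prescribed $bc$-subword counts of $\alpha, \beta, \alpha', \beta'$ are all pinned, and an elementary enumeration of short $\{b,c\}^*$-words uniquely identifies $\alpha = cbcb$, $\beta = bc^n$, $\alpha' = cb$, $\beta' = bcbc$, giving $w' = w$. The hard part will be the simultaneous case analysis eliminating $r = 2$: the bounds coming from $w_1'$ alone admit this value, and the contradiction emerges only after substituting the forced structural data of $w_2'$ back into the joint $|w'|_{bc}$ identity, so the argument must genuinely couple the two halves across $d$.
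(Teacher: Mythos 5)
Your proposal is correct; I checked the load-bearing steps and they all hold: the divisibility contradiction ruling out $|w_2'|_a=0$ (it does reduce to $3\mid 17$), the identities $|\beta|_{bc}=n$, $|\alpha|_{bc}+ps=2n+1$, $|\beta'|_b=p$, $|\alpha'|_b=6-2p-r$, $|\beta'|_{bc}=3(p-1)$, the bound $p\ge(2n+1)/(n+2)$ that forces $p=2$ exactly when $n>1$, the elimination of $r\in\{0,2\}$, and the final enumeration pinning $\alpha=cbcb$, $\beta=bc^n$, $\alpha'=cb$, $\beta'=bcbc$. However, your route is genuinely different from the paper's. The paper imports this theorem from \cite{GT17b} and, in Section~4, re-derives it by listing all $36$ candidates for $\pi_{\{a,b,d\}}(w')$ and solving, for each one, a rational linear system in the ten $c$-exponents; only two of the systems are worked out and the remaining $34$ are asserted to have no integral solutions for $n>1$. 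You instead split at the unique $d$, read off the one-sided data $|w_1'|_c$, $|w_1'|_{bc}$, $|w_1'|_{abc}$ from the entries $|w|_{cd}$, $|w|_{bcd}$, $|w|_{abcd}$, and run a short parametric case analysis in $p,q,r,s$, with the hypothesis $n>1$ entering through the single inequality $|\alpha|_{bc}\le|\alpha|_b|\alpha|_c$. This is closer in spirit to the paper's proof of Theorem~\ref{BasisTheo1} (which also splits at $d$ and uses the same $|u|_{bc}\le|u|_b|u|_c$ device in its Case~2) than to its treatment of the present statement, and it is considerably tighter: a handful of cases rather than thirty-six systems. What the paper's heavier, mechanical formulation buys is uniformity --- the same ``enumerate projections, solve rational linear systems'' scheme is precisely the engine behind Lemma~\ref{LemMain} and Theorem~\ref{TheoEventPeriod1} on periodicity --- whereas your argument is shorter and self-contained but tailored to this particular word.
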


At this point, it is natural for one to ask Question~\ref{MainQues}, which is in a more general setting. 

\begin{definition}\cite{SS06}
Suppose $\Sigma$ is an alphabet and $w\in \Sigma^*$. Suppose $w=a^{p_1}_1a^{p_2}_2\cdots a^{p_n}_n$ such that $a_i\in\Sigma$ and $p_i>0$ for all $1\le i\le n$ with $a_i\neq a_{i+1}$ for all $1\le i\le n-1$. The \textit{print} of $w$, denoted by $pr(w)$, is the word $a_1a_2\cdots a_n$. %The word $w$ is a \textit{print word} (or simply \textit{print}) if and only if $pr(w)=w$.
\end{definition}

\begin{definition}
Suppose $\Sigma$ is an ordered alphabet and $w,w'\in\Sigma^*$. We write $w\dashv w'$ iff $w=uav$ and $w'=uaav$ for some $u,v\in\Sigma^*$ and $a\in\Sigma$.
\end{definition}

\begin{question}\label{MainQues}
Suppose $\Sigma=\{a<b<c<d\}$. Consider an infinite sequence of words $w_i\in\Sigma^*,i\ge 0$ such that $pr(w_0)=w_0$ and 
\begin{equation*}
w_0\dashv w_1\dashv w_2\dashv\dotsb.
\end{equation*}
In what patterns can the $M$\!-ambiguity of these words sequentially change?
\end{question}

In the spirit of answering the above question, we define the following notion.
\begin{definition}
Suppose $\Sigma$ is an ordered alphabet. Let $\varphi=\{w_i\}_{i\ge 0}$ be a sequence of words over $\Sigma$ such that for all integers $i\ge 0$, we have $w_i\dashv w_{i+1}$. We say that a sequence $\{m_i\}_{i\ge 0}$ is the \textit{$M$\!-ambiguity sequence corresponding to $\varphi$}, denoted by $\Theta_\varphi$, if and only if for every integer $i\ge 0$, we have $m_i\in\{A,U\}$ such that if $m_i=A$, then $w_i$ is $M$\!-ambiguous; otherwise if $m_i=U$, then $w_i$ is $M$\!-unambiguous.
\end{definition}

By the above definition, one can see that Question~\ref{MainQues} actually asks for the attainable patterns of $M$\!-ambiguity sequence, where the associated sequence of words starts with a print word. The following two examples, first presented in \cite{GT17e}, provide a partial answer to this question.

For the remaining part of this section, we fix $\Sigma=\{a<b<c<d\}$. Whenever the \mbox{$M$\!-ambiguity} of
a word is mentioned, it is understood that it is with \mbox{respect to $\Sigma$.}

\begin{example}\label{Example1}
For each integer $n\ge 1$, let $w_{n,m}=c^nbcbabc^mdcbabcbc$. If $m=n$, then $w_{n,m}$ is \mbox{$M$\!-ambiguous} as it is $M$\!-equivalent to the word $bc^{n+1}abc^nbdbcbaccb$. If $m=n+1$, then $w_{n,m}$ is $M$\!-unambiguous \cite[Theorem~3.6]{GT17e}.

Therefore, if one wants to obtain a sequence $\varphi$ of words (where the first word is a print word) such that $\Theta_\varphi=A,U,A,U,A,U,\cdots$, the duplication of letters can be carried out in the following manner:
$$w_{1,1},\, w_{1,2},\, w_{2,2},\, w_{2,3},\, w_{3,3},\, w_{3,4},\, \cdots .$$
\end{example}

\begin{example}\label{Example2}
The words $cbabcdcbabc$ and $cbabcdcbabbc$ are $M$\!-unambiguous (computationally verified). By duplicating the first letter $b$ in that word, we obtain the $M$-ambiguous word $cbbabcdcbabbc$ (it is \mbox{$M$\!-equivalent} to the word $bcabcbdbcbacb$). For each integer $n\ge 1$, let $w_{n,m}=c^nbbabc^mdcbabbc$. If $m=n,$ then $w_{n,m}$ is \mbox{$M$-ambiguous} as it is $M$\!-equivalent to the word $bc^nabc^nbdbcbacb$.  If $m=n+1$, then $w_{n,m}$ is $M$\!-unambiguous \cite[Theorem~3.7]{GT17e}.

Therefore, if one wants to obtain a sequence $\varphi$ of words (where the first word is a print word) such that $\Theta_\varphi=U,U,A,U,A,U,A,\cdots$, the duplication of letters can be carried out in the following manner:
$$cbabcdcbabc, cbabcdcbabbc, w_{1,1},\, w_{1,2},\, w_{2,2},\, w_{2,3},\, w_{3,3},\, w_{3,4},\, \cdots .$$
\end{example}

\begin{remark}\label{RemReason}
Example~\ref{Example1} and Example~\ref{Example2} shows that $M$\!-ambiguity sequences with alternating $A$ and $U$ are attainable. In contrast to Example~\ref{Example1}, the word $w_{1,1}$ in Example~\ref{Example2} is not a print word. That is why we needed the word $cbabcdcbabc$ to begin the sequence, followed by $cbabcdcbabbc$, before we reach $w_{1,1}$.
\end{remark}

We now generalize the words used in Example~\ref{Example1} and Example~\ref{Example2} to provide a more nearly complete answer---almost any pattern of \mbox{$M$\!-ambiguity} sequence is attainable. For that, we need the following observations and theorems as a basis.

\begin{observation}\label{BasisObs1}
For all positive integers $n$ and $p$, the word $c^nbcbabc^ndcbabcbc^p$ is \mbox{$M$\!-ambiguous} as it is \mbox{$M$\!-equivalent} to the word $bc^{n+1}abc^nbdbcbaccbc^{p-1}$. 
\end{observation}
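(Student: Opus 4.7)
The plan is to invoke Theorem~\ref{1206a}: since $\Sigma = \{a<b<c<d\}$ has four letters, two words over $\Sigma$ are $M$-equivalent if and only if they agree on the ten subword counts $|\cdot|_v$ for $v \in \{a,b,c,d,ab,bc,cd,abc,bcd,abcd\}$. The proof thus reduces to a direct verification that $w_1 = c^nbcbabc^ndcbabcbc^p$ and $w_2 = bc^{n+1}abc^nbdbcbaccbc^{p-1}$ produce identical tallies on each of these ten invariants.

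First I would dispatch the four length-one counts: both words contain $2$ letters $a$, $6$ letters $b$, $2n+p+3$ letters $c$, and exactly one $d$. For the three two-letter counts I would fix one letter type and count occurrences of the other on one designated side. For instance, $|w|_{cd}$ equals the number of $c$'s to the left of the unique $d$, which in both words is $2n+1$; the counts $|w|_{ab}$ and $|w|_{bc}$ follow by an analogous scan through the block decomposition dictated by the $c$-runs of each word.

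For the three- and four-letter invariants I would exploit the fact that $d$ occurs exactly once in each word. If $w_{\mathrm{pre}}$ denotes the prefix terminating just before this $d$, then $|w|_{bcd} = |w_{\mathrm{pre}}|_{bc}$ and $|w|_{abcd} = |w_{\mathrm{pre}}|_{abc}$, so these two counts reduce to subword computations on the short words $c^nbcbabc^n$ and $bc^{n+1}abc^nb$; both totals come out polynomial in $n$ alone, independent of $p$. The remaining count $|w|_{abc}$ splits into contributions arising both before and after the $d$, so I would partition the sum according to the two $a$-positions in each word and, for each such $a$, tally the $bc$-subwords lying in its right-suffix; the resulting polynomials in $n$ and $p$ can then be matched between $w_1$ and $w_2$.

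The main obstacle is the volume of bookkeeping for $|w|_{abc}$ and $|w|_{bc}$, since the positions of the two $a$'s and six $b$'s relative to the central $d$ differ substantially between $w_1$ and $w_2$, so the suffix-tallies have superficially different shapes even though they sum to the same polynomial. One can streamline the enumeration by first applying 1-equivalence (Definition~\ref{1906a}) to commute $a$ past adjacent $c$'s and $d$'s and $b$ past adjacent $d$'s (permissible since the corresponding letter ranks differ by at least two), which gathers commuting letters into contiguous blocks and reduces the final check to a short polynomial comparison in $n$ and $p$.
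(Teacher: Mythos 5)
Your proposal is correct and matches the paper's (implicit) argument: the paper states Observation~\ref{BasisObs1} without proof, the justification being exactly the routine verification via Theorem~\ref{1206a} that the two exhibited words agree on the ten subword counts $|\cdot|_v$ for $v\in\{a,b,c,d,ab,bc,cd,abc,bcd,abcd\}$ (all your sample tallies, e.g.\ $2n+p+3$ occurrences of $c$ and $2n+1$ for $|\cdot|_{cd}$, check out, and the full computation gives $|\cdot|_{bc}=3n+6p+9$, $|\cdot|_{abc}=n+6p+5$, $|\cdot|_{bcd}=3n+1$, $|\cdot|_{abcd}=n$ for both words). Just remember to record explicitly that the two words are distinct (they begin with different letters), since $M$\!-ambiguity requires $M$\!-equivalence to a \emph{different} word.
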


The proof of the following result closely resembles that of Theorem~3.6 in \cite{GT17e}, yet we include it here for completeness.

\begin{theorem}\label{BasisTheo1}
The word $w=c^nbcbabc^mdcbabcbc^p$ is $M$\!-unambiguous for all integers $n\geq 1$, $p\ge 1$ and $m\ge n+1$.
\end{theorem}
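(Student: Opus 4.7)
The plan is to adapt, essentially verbatim, the proof of Theorem~3.6 in \cite{GT17e}, generalizing from the special case $(n,p)=(1,1)$ to arbitrary $n,p\ge 1$ and $m\ge n+1$. Suppose for contradiction that some $w'\in\Sigma^*$ satisfies $w'\equiv_M w$ and $w'\neq w$. Because $|w|_d=1$, the unique occurrence of $d$ in $w'$ gives a splitting $w'=u'dv'$ with $u',v'\in\{a,b,c\}^*$. The three Parikh matrix entries that contain $d$ immediately pin down
\[
|u'|_c=|w|_{cd}=n+m+1,\qquad |u'|_{bc}=|w|_{bcd}=3m+1,\qquad |u'|_{abc}=|w|_{abcd}=m,
\]
and the remaining entries force $|v'|_c=p+2$, $|u'|_a+|v'|_a=2$, $|u'|_b+|v'|_b=6$, together with three further (bi)linear identities arising from $|w|_{ab}$, $|w|_{bc}$, and $|w|_{abc}$.

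Next I would invoke Proposition~\ref{CoreabcMiddle} on $u'$ (viewed as a word over the ordered subalphabet $\{a<b<c\}$), and on $v'$ when $|v'|_{abc}\ge 1$, replacing each by its $1$-equivalent normal form $\alpha\,\core_{abc}(\cdot)\,\beta$ with $\alpha\in\{b,c\}^*$ and $\beta\in\{a,b\}^*$. Since $1$-equivalence is a refinement of $M$-equivalence, this substitution is harmless and reduces the problem to determining the shape of the two cores, which is tightly constrained by the prescribed counts.

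The bulk of the argument is a finite case analysis on $|u'|_a\in\{0,1,2\}$. The value $0$ is discarded at once, since $|u'|_{abc}=m\ge 2>0$ requires $|u'|_a\ge 1$. For $|u'|_a\in\{1,2\}$, a useful intermediate identity obtained by subtracting the equations for $|w|_{abc}$ and $|w|_{bc}$,
\[
(|u'|_b-|u'|_{ab})(p+2)+(1-|u'|_a)\,|v'|_{bc}-|v'|_{abc}=3,
\]
combined with the feasibility bounds $|u'|_{ab}\le|u'|_a|u'|_b$ and $|v'|_{abc}\le|v'|_{bc}\le|v'|_b|v'|_c$, leaves only a short list of integer-feasible tuples $(|u'|_b,|u'|_{ab},|v'|_b,|v'|_{ab},|v'|_{bc},|v'|_{abc})$. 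For each such tuple, the shape of $\core_{abc}(u')$ and of $\core_{abc}(v')$ is determined (up to $1$-equivalence) by its $a$, $b$, $c$ counts together with its pair and triple counts.

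The main obstacle is where the hypothesis $m\ge n+1$ enters: each alternative tuple must be shown to be incompatible with the exact counts $|u'|_c=n+m+1$, $|u'|_{bc}=3m+1$, $|u'|_{abc}=m$ unless it is the one arising from the decomposition of $w$ itself. Intuitively, any candidate $u'\not\equiv_1 c^n bcbabc^m$ would require an extra $c$ in the middle block in order to realise $|u'|_{abc}=m$ with the available $a$'s and $b$'s, forcing an inequality of the form $m\le n$ and contradicting $m\ge n+1$; a parallel check on the $v'$ side, using the fixed count $|v'|_c=p+2$, rules out the corresponding alternatives there. Once every candidate except the one matching $(u,v)=(c^n bcbabc^m,\,cbabcbc^p)$ is eliminated, a direct inspection using the precise values of $|u'|_{ab}$ and $|v'|_{ab}$ to locate the unique $a$ in each half yields $u'=u$ and $v'=v$, hence $w'=w$, contradicting our assumption.
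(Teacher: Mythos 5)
Your proposal follows essentially the same route as the paper's proof: argue by contradiction, split $w'$ at the unique $d$, pin down the subword counts of the two halves from the Parikh matrix entries involving $d$, and run a finite case analysis (the paper indexes the cases by the four possible projections $\pi_{\{a,b\}}(w')$ rather than by $|u'|_a$, but the content is the same), with the hypothesis $m\ge n+1$ entering exactly where you place it---via Proposition~\ref{CoreabcMiddle} and a count of $bc$-occurrences in the left half forcing $m\le n$. The only caveat is that your ``intuitive'' paragraph is precisely the paper's Case~2, which it carries out in full, and the final identification $u'=u$, $v'=v$ is obtained in the paper by citing the known $M$-unambiguity of the two ternary factors rather than by direct inspection.
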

\begin{proof}
We argue by contradiction. Fix integers $n\geq 1$, $p\ge 1$ and $m\ge n+1$. Assume that $w$ is \mbox{$M$\!-ambiguous.} Then, \mbox{$w\equiv_Mw'$} for some $w'\in\Sigma^*$ such that $w'\neq w$. It follows that $\pi_{\{a,b\}}(w)\equiv_M\pi_{\{a,b\}}(w')$, $\pi_{\{b,c\}}(w)\equiv_M\pi_{\{b,c\}}(w')$ and $\pi_{\{c,d\}}(w)\equiv_M\pi_{\{c,d\}}(w')$. Note that $\pi_{\{c,d\}}(w)=c^{n+m+1}dc^{p+2}$ is \mbox{$M$\!-unambiguous}, thus $\pi_{\{c,d\}}(w)=\pi_{\{c,d\}}(w')$. Meanwhile, $\pi_{\{a,b\}}(w)=bbabbabb$. Thus, $\pi_{\{a,b\}}(w')$ is either $bbabbabb$, $babbbbab$, $bbbaabbb$, or $abbbbbba$.

Write $w=\underbrace{c^nbcbabc^{m}}_{v_1}d\underbrace{cbabcbc^p}_{v_2}$ and $w'=v_1'dv_2'$, where $v_1',v_2'\in\{a,b,c\}^*$. (Note that $v_1$ and $v_2$ are both $M$\!-unambiguous\footnote{See Theorem A.1 in \cite{SS06}.} as this fact will be needed later in this proof.) Since $|w|_x=|w'|_x$ for every $x\in\{abcd,bcd,cd\}$, it follows that $|v_1|_y=|v_1'|_y$ for every $y\in\{abc,bc,c\}$. Furthermore, since $|v_1|_c+|v_2|_c=|w|_c=|w'|_c=|v_1'|_c+|v_2'|_c$, we have $|v_2|_c=|v_2'|_c$.

Note that $|w'|_{bc}=|v_1'|_{bc}+|v_1'|_b|v_2'|_c+|v_2'|_{bc}=|v_1|_{bc}+|v_1'|_b\cdot (p+2)+|v_2'|_{bc}$. At the same time,  $|w'|_{bc}=|w|_{bc}=|v_1|_{bc}+|v_1|_b|v_2|_c+|v_2|_{bc}=|v_1|_{bc}+3\cdot (p+2)+(3p+2)=|v_1|_{bc}+6p+8$. Thus,
\begin{equation}
|v_1'|_b\cdot (p+2)+|v_2'|_{bc}=6p+8.\tag{$\star$}
\end{equation}
Meanwhile, we have $|v_1'|_b\le |w'|_b=|w|_b=6$. If $|v_1'|_b=6$, then $|v_2'|_{bc}=-4$, which is impossible. Thus $|v_1'|_b\le 5$. Also, since $|v_1'|_{abc}=|v_1|_{abc}=m\ge n+1$, it follows that $|\core_{abc}(v_1')|_b\ge 1$. 

\begin{case1}$\pi_{\{a,b\}}(w')=bbabbabb$.\\
Since $|v_1'|_b\le 5$, $|\core_{abc}(v_1')|_b\ge 1$ and $\pi_{\{a,b\}}(w')=\pi_{\{a,b\}}(v_1')\pi_{\{a,b\}}(v_2')$, it follows that $\pi_{\{a,b\}}(v_1')\in\{bbab,bbabb,bbabba,bbabbab\}$.

Assume $\pi_{\{a,b\}}(v_1')=bbabbab$. Then $|v_1'|_{ab}=4$. Furthermore, as $|v_1'|_b=5$, it holds by $(\star)$ that $|v_2'|_{bc}=p-2$. Note that $\pi_{\{a,b\}}(v_2')=b$, therefore $|v_2'|_a=0$ and consequently $|v_2'|_{abc}=0$. Thus
\begin{align*}
|w'|_{abc}={}&|v_1'|_{abc}+|v_1'|_{ab}|v_2'|_c+|v_1'|_a|v_2'|_{bc}+|v_2'|_{abc}\\
={}&|v_1'|_{abc}+|v_1'|_{ab}|v_2'|_c+|v_1'|_a|v_2'|_{bc}\\
={}&m+4\cdot (p+2)+2\cdot (p-2)\\
={}&m+4p+8+2p-4\\
={}&m+6p+4.
\end{align*}
That is to say, $|w'|_{abc}=m+6p+4<m+6p+5=|w|_{abc}$, which is a contradiction.

Assume $\pi_{\{a,b\}}(v_1')\in\{bbabb,bbabba\}$. Then, $|v_1'|_{ab}=2$. Furthermore, as $|v_1'|_b=4$, it holds by $(\star)$ that $|v_2'|_{bc}=2p$. Note that if $\pi_{\{a,b\}}(v_1')=bbabb$, then $\pi_{\{a,b\}}(v_2')=abb$. Consequently $|v_2'|_a=1$ and therefore $|v_2'|_{abc}=|v_2'|_{bc}$. Otherwise if $\pi_{\{a,b\}}(v_1')=bbabba$, then $\pi_{\{a,b\}}(v_2')=bb$. Consequently, $|v_2'|_a=0$ and therefore $|v_2'|_{abc}=0$ as well. In both cases, we have $|v_2'|_{abc}=|v_2'|_a|v_2'|_{bc}$. Thus
\begin{align*}
|w'|_{abc}={}&|v_1'|_{abc}+|v_1'|_{ab}|v_2'|_c+|v_1'|_a|v_2'|_{bc}+|v_2'|_{abc}\\
={}&|v_1'|_{abc}+|v_1'|_{ab}|v_2'|_c+|v_1'|_a|v_2'|_{bc}+|v_2'|_a|v_2'|_{bc}\\
={}&|v_1|_{abc}+|v_1'|_{ab}|v_2'|_c+(|v_1'|_a+|v_2'|_a)\cdot |v_2'|_{bc}\\
={}&m+2\cdot (p+2)+2\cdot 2p\\
={}&m+2p+4+4p\\
={}&m+6p+4.
\end{align*}
Similar to the case $\pi_{\{a,b\}}(v_1')=bbabbab$, we have $|w'|_{abc}=m+6p+4<m+6p+5=|w|_{abc}$, which is a contradiction.

Thus $\pi_{\{a,b\}}(v_1')=bbab$. We have $\pi_{\{a,b\}}(v_1')=\pi_{\{a,b\}}(v_1)$, therefore $|v_1'|_y=|v_1|_y$ for every $y\in\{a,b,ab\}$. As we already know that $|v_1'|_y=|v_1|_y$ for every $y\in\{abc,bc,c\}$, it follows that 
$v_1'\equiv_M v_1$ with respect to $\{a<b<c\}$. However, $v_1$ is \mbox{$M$\!-unambiguous}, thus $v_1'=v_1$. Consequently, $v_2'\equiv_M v_2$ with respect to $\{a<b<c\}$ by the left invariance of $M$\!-equivalence. Similarly, $v_2$ is \mbox{$M$\!-unambiguous}, thus $v_2'=v_2$. Therefore $w'=w$, which is a contradiction.
\end{case1}

\begin{case1}$\pi_{ab}(w')=babbbbab$.\\
By similar reasoning as in Case~1, we have $\pi_{\{a,b\}}(v_1')=\{bab, babb, babbb, babbbb\}$. In all four cases, $|v_1'|_a=1$. Also, note that $|v_1'|_c=|v_1|_c=n+m+1$, $|v_1'|_{bc}=|v_1|_{bc}=3m+1$ and $|v_1'|_{abc}=|v_1|_{abc}=m$.

By Proposition~\ref{CoreabcMiddle}, it holds that $v_1'\equiv_1u_1\core_{abc}(v_1')u_2$ for some unique $u_1\in\{b,c\}^*$ and $u_2\in\{a,b\}^*$. Since $\pi_{ab}(v_1')\in\{bab,babb,babbb,babbbb\}$ and $a$ is a prefix of $\core_{abc}(v_1')$, it follows that $\vert u_1\vert_b=1$.
Also, note that $|v_1'|_{bc}=|u_1|_{bc}+|u_1|_b|\core_{abc}(v_1')|_c+|\core_{abc}(v_1')|_{bc}$. Since $|\core_{abc}(v_1')|_{abc}=\vert v_1'\vert_{abc}=m$,
$a$ is a prefix of $\core_{abc}(v_1')$, and that is the only $a$ in $\core_{abc}(v_1')$, it follows that $|\core_{abc}(v_1')|_{bc}=m$.
Additionally, since $|u_1|_{bc}+ |u_1|_{cb}=|u_1|_b|u_1|_c$, it follows that $|u_1|_{bc} \le |u_1|_b|u_1|_c= |u_1|_c$. Therefore,
$|v_1'|_{bc}
\le |u_1|_{c}+|\core_{abc}(v_1')|_c+m
=|v_1'|_c+m
=n+m+1+m
=n+2m+1$. 
Consequently, $3m+1=|v_1|_{bc}=|v_1'|_{bc}\le n+2m+1$, which reduces to $m\le n$. Thus a contradiction occurs.
\end{case1}

\begin{case1}$\pi_{ab}(w')=bbbaabbb$.\\
This case is impossible. Observe that $|v_1'|_b=3+|\core_{abc}(v_1')|_b$. Since $|v_1'|_b\le 5$ and $|\core_{abc}(v_1')|_b\ge 1$, it follows that $\pi_{ab}(v_1')\in\{bbbaab,bbbaabb\}$. 

If $\pi_{ab}(v_1')=bbbaabb$, then $|v_1'|_b=5$ and consequently $|v_2'|_{bc}=p-2$ due to $(\star)$. Correspondingly, we have $|w'|_{abc}
=|v_1'|_{abc}+|v_1'|_{ab}|v_2'|_c+|v_1'|_a|v_2'|_{bc}+|v_2'|_{abc}
=m+4\cdot (p+2)+2\cdot (p-2)+0=m+6p+4$.
On the other hand, if $\pi_{ab}(v_1')=bbbaab$, then $|v_1'|_b=4$ and consequently $|v_2'|_{bc}=2p$ due to $(\star)$. Correspondingly, we have $|w'|_{abc}
=|v_1'|_{abc}+|v_1'|_{ab}|v_2'|_c+|v_1'|_a|v_2'|_{bc}+|v_2'|_{abc}
=m+2\cdot (p+2)+2\cdot (2p)+0=m+6p+4$ as well. 
In both cases, $|w'|_{abc}<m+6p+5=|w|_{abc}$, which is a contradiction.
\end{case1}

\begin{case1}$\pi_{ab}(w')=abbbbbba$.\\
This case is trivially impossible. Note that $|v_1'|_{bc}=|v_1|_{bc}=3m+1$. Consequently, $|v_1'|_{abc}=1\cdot|v_1'|_{bc}=3m+1$. However, $|v_1'|_{abc}=|v_1|_{abc}=m$, thus a contradiction.
\qedhere
\end{case1}
\end{proof}

Observation~\ref{BasisObs1} and Theorem~\ref{BasisTheo1} allow us to generate sequences of words (starting with a print word) such that the first word is \mbox{$M$\!-ambiguous} and the \mbox{$M$\!-ambiguity} of the remaining words sequentially change in an arbitrary pattern. This is illustrated by the following example.

\begin{example}\label{ExFibo}
Consider the Fibonacci sequence $0,1,1,2,3,5,8,13,\cdots$.
Suppose we want to generate a sequence of words realizing the $M$\!-ambiguity sequence where the number of terms $U$ between two consecutive terms $A$ follows the Fibonacci sequence---i.e, $A,A,U,A,U,A,U,U,A,\cdots$. 

For all integers $n,m,p\ge 1$, let $w_{n,m,p}=c^nbcbabc^mdcbabcbc^p$. By Observation~\ref{BasisObs1}, if $m=n$, then $w_{n,m,p}$ is $M$\!-ambiguous for any $p\ge 1$. By Theorem~\ref{BasisTheo1}, if $m=n+1$, then $w_{n,m,p}$ is $M$\!-unambiguous for any $p\ge 1$. Thus, it remains to duplicate the letters in the following manner:
\begin{equation*}
w_{1,1,1},w_{1,1,2},w_{1,2,1},w_{2,2,1},w_{2,3,1},w_{3,3,1},w_{3,4,1},w_{3,4,2},w_{4,4,2},\cdots
\end{equation*}
(Notice that whenever we need to retain the preceding term, we increase the power $p$ by one---that is to duplicate the last letter $c$.)
\end{example}

On the other hand, to generate similar sequences of words such that the first word is \mbox{$M$\!-unambiguous}, we need the following observation and result.

\begin{observation}\label{BasisObs2}
For all positive integers $n$ and $p$, the word $c^nbbabc^ndcbabbc^p$ is \mbox{$M$\!-ambiguous} as it is \mbox{$M$-equivalent} to the word $bc^nabc^nbdbcbacbc^{p-1}$.
\end{observation}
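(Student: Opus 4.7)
The plan is to verify the claim by directly computing and matching the Parikh matrices of the two words. Setting $w = c^nbbabc^ndcbabbc^p$ and $w' = bc^nabc^nbdbcbacbc^{p-1}$, Theorem~\ref{1206a} reduces $\Psi_\Sigma(w) = \Psi_\Sigma(w')$ to checking that the ten subword counts $|\cdot|_x$ agree for $x$ ranging over the consecutive factors $\{a, b, c, d, ab, bc, cd, abc, bcd, abcd\}$ of $abcd$.

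The length-one subword counts can be read off immediately from the block decompositions: each word contains two $a$'s, six $b$'s, $2n+p+1$ $c$'s, and one $d$. For any subword ending in $d$, I would exploit the fact that $d$ occurs exactly once in each word. Writing $w = w_1\,d\,w_2$ and $w' = w_1'\,d\,w_2'$ with $w_1, w_2, w_1', w_2'$ all $d$-free, the concatenation identity collapses to
\begin{equation*}
|w|_{cd} = |w_1|_c, \qquad |w|_{bcd} = |w_1|_{bc}, \qquad |w|_{abcd} = |w_1|_{abc},
\end{equation*}
and analogously for $w'$. Here $w_1 = c^nbbabc^n$ and $w_1' = bc^nabc^nb$ share the same single-letter totals ($1$ $a$, $3$ $b$'s, $2n$ $c$'s), and a short position-counting argument confirms $|w_1|_{bc} = |w_1'|_{bc} = 3n$ and $|w_1|_{abc} = |w_1'|_{abc} = n$.

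It remains to verify $|w|_{ab} = |w'|_{ab}$, $|w|_{bc} = |w'|_{bc}$, and $|w|_{abc} = |w'|_{abc}$, which I would handle either block-wise via the concatenation formula
\begin{equation*}
|uv|_{x_1 \dotsm x_r} = \sum_{i=0}^{r} |u|_{x_1 \dotsm x_i}\,|v|_{x_{i+1} \dotsm x_r}
\end{equation*}
or, equivalently, by enumerating each relevant letter position and counting admissible continuations to its right. The $|\cdot|_{ab}$ count is immediate (both sides are $6$), and $|\cdot|_{bc}$ reduces to $3n+3+6p$ after summing over the six $b$-positions in each word. The calculation-heaviest step will be $|\cdot|_{abc}$, since the two $a$'s in each word are placed so that their contributions involve the interior $c^n$-block and the trailing $c^p$-block simultaneously; a direct tally shows both sides reduce to the common linear expression $n+1+6p$. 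Once all ten counts match, $\Psi_\Sigma(w) = \Psi_\Sigma(w')$, and since plainly $w \neq w'$, the word $w$ is $M$-ambiguous.
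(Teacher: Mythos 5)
Your proposal is correct: all ten subword counts you report ($|w|_a=|w'|_a=2$, $|w|_b=6$, $|w|_c=2n+p+1$, $|w|_{ab}=6$, $|w|_{bc}=3n+3+6p$, $|w|_{cd}=2n$, $|w|_{abc}=n+1+6p$, $|w|_{bcd}=3n$, $|w|_{abcd}=n$, $|w|_d=1$) check out for both words, and the two words are visibly distinct since one begins with $c$ and the other with $b$. This direct verification of the Parikh matrices is exactly the (implicit) justification behind the paper's Observation, which is stated without proof precisely because it reduces to this routine computation.
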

 
\begin{theorem}\label{BasisTheo2}
The word $c^nbbabc^mdcbabbc^p$ is \mbox{$M$\!-unambiguous} for all integers $n\geq 1$, $p\ge 1$ and $m\ge n+1$.
\end{theorem}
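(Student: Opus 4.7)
The plan is to mimic the proof of Theorem~\ref{BasisTheo1} with the arithmetic adjusted to the new word. Fix $n \ge 1$, $p \ge 1$, $m \ge n+1$ and write $w = v_1\,d\,v_2$ with $v_1 = c^nbbabc^m$ and $v_2 = cbabbc^p$, both of which are $M$-unambiguous ternary words (cf.\ Theorem~A.1 in \cite{SS06}). Assume toward contradiction that $w \equiv_M w'$ with $w' \ne w$, and decompose $w' = v_1'\,d\,v_2'$. Two projections constrain $w'$: $\pi_{\{c,d\}}(w) = c^{n+m}dc^{p+1}$ is $M$-unambiguous, so $|v_1'|_c = n+m$ and $|v_2'|_c = p+1$; and $\pi_{\{a,b\}}(w) = bbabbabb$ has the four $M$-equivalents $bbabbabb$, $babbbbab$, $bbbaabbb$, $abbbbbba$, giving four cases for $\pi_{\{a,b\}}(w')$. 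Equating the $bc$-, $abcd$-, and total $c$-entries of the Parikh matrices produces
\[
|v_1'|_{bc} = 3m, \qquad |v_1'|_{abc} = m, \qquad |v_1'|_b(p+1) + |v_2'|_{bc} = 6p + 3, \tag{$\star$}
\]
whence $|v_1'|_b \le 5$; and from $|v_1'|_{abc} = m \ge 2$, Proposition~\ref{CoreabcMiddle} gives $|\core_{abc}(v_1')|_b \ge 1$.

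The case $\pi_{\{a,b\}}(w') = abbbbbba$ is eliminated immediately: the unique $a$ of $v_1'$ then precedes every $b$ of $v_1'$, so $|v_1'|_{abc} = |v_1'|_{bc}$, i.e.\ $m = 3m$, impossible. For $\pi_{\{a,b\}}(w') \in \{bbabbabb, bbbaabbb\}$, the prefix constraint on $\pi_{\{a,b\}}(v_1')$ together with $|v_1'|_b \le 5$ and $|\core_{abc}(v_1')|_b \ge 1$ leaves only a short list of options. For each, one reads off $|v_1'|_a, |v_1'|_{ab}, |v_2'|_a, |v_2'|_{abc}$ from the prints and uses $(\star)$ to compute $|v_2'|_{bc}$; substituting into
\[
|w'|_{abc} = |v_1'|_{abc} + |v_1'|_{ab}|v_2'|_c + |v_1'|_a|v_2'|_{bc} + |v_2'|_{abc}
\]
and comparing with $|w|_{abc} = m + 6p + 1$ yields $|w'|_{abc} = m + 6p$ in every option except $\pi_{\{a,b\}}(v_1') = bbab = \pi_{\{a,b\}}(v_1)$. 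In that trivial subcase, all six counts $|v_1'|_a,|v_1'|_b,|v_1'|_c,|v_1'|_{ab},|v_1'|_{bc},|v_1'|_{abc}$ equal the corresponding ones of $v_1$, so $v_1' \equiv_M v_1$ over $\{a<b<c\}$ and hence $v_1' = v_1$ by $M$-unambiguity; left-invariance of $M$-equivalence then forces $v_2' = v_2$ and $w' = w$, a contradiction.

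The remaining case $\pi_{\{a,b\}}(w') = babbbbab$ resists direct counting and must be handled exactly as Case~2 of Theorem~\ref{BasisTheo1}. By Proposition~\ref{CoreabcMiddle}, write $v_1' \equiv_1 u_1\core_{abc}(v_1')u_2$ with $u_1 \in \{b,c\}^*$ and $u_2 \in \{a,b\}^*$; since the unique $a$ of $v_1'$ sits in the core, $|u_1|_b = 1$, and since the lone $a$ is a prefix of the core, $|\core_{abc}(v_1')|_{bc} = m$. Using $|u_1|_{bc} \le |u_1|_b|u_1|_c = |u_1|_c$ and $|u_2|_c = 0$ gives
\[
|v_1'|_{bc} \le |u_1|_c + |\core_{abc}(v_1')|_c + m = |v_1'|_c + m = n + 2m,
\]
which combined with $|v_1'|_{bc} = 3m$ forces $m \le n$, contradicting $m \ge n+1$. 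The main expected difficulty is merely bookkeeping: the constants here are smaller than in Theorem~\ref{BasisTheo1} ($|v_2|_c = p+1$ vs.\ $p+2$; $|w|_{abc} = m + 6p + 1$ vs.\ $m + 6p + 5$), so every subcase hits $|w|_{abc}$ by exactly one, and the arithmetic must be redone scrupulously even though the method is unchanged.
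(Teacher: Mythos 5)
Your proposal is correct and is exactly the argument the paper intends: its own proof of this theorem is literally ``Argue similarly as in the proof of Theorem~\ref{BasisTheo1},'' and you have carried out that adaptation faithfully, with the adjusted constants checking out ($|v_2'|_c=p+1$, $(\star)$ becoming $|v_1'|_b(p+1)+|v_2'|_{bc}=6p+3$, each non-trivial subcase giving $|w'|_{abc}=m+6p<m+6p+1=|w|_{abc}$, and the $babbbbab$ case yielding $3m\le n+2m$, i.e.\ $m\le n$). The only cosmetic imprecision, inherited from the paper's own Case~4, is speaking of ``the unique $a$ of $v_1'$'' in the $abbbbbba$ case, where $v_1'$ could a priori contain both $a$'s; the contradiction $|v_1'|_{abc}\ge|v_1'|_{bc}=3m>m$ survives regardless.
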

\begin{proof}
Argue similarly as in the proof of Theorem~\ref{BasisTheo1}. \qedhere
\end{proof}

\begin{remark}\label{RemDrawback}
When $n\!=\!m\!=\!p\!=\!1$, in contrast to the word in \mbox{Theorem~\ref{BasisTheo1},} the word in Theorem~\ref{BasisTheo2} is not a print word. Thus, similarly as in Example~\ref{Example2}, we need the $M$\!-unambiguous words $cbabcdcbabc$ and $cbabcdcbabbc$ on top of Observation~\ref{BasisObs2} and Theorem~\ref{BasisTheo2} to realize $M$\!-ambiguity sequences starting with $U$. However, this forces the first three terms to be $U$, $U$, and $A$ before we can change the terms arbitrarily.
\end{remark}

\section{Periodicity of \textit{M}\!-ambiguity Sequences}

Consider the word $cbcbab\underline{c}dcbabcbc$ over the ordered alphabet $\{a<b<c<d\}$. By Theorem~\ref{ConjectureOverturn}, it holds that every duplication of the underlined letter $c$ in that word gives rise to an $M$\!-unambiguous word. Thus for the sequence of words $\varphi=\{w_i\}_{i\ge 1}$ such that $w_i=cbcbabc^idcbabcbc$, we have $\Theta_{\varphi}=A,U,U,U,\cdots$. 

We see that the sequence $\Theta_{\varphi}$ is eventually periodic with its period being one. Thus we seek to know whether the periodicity of an $M$\!-ambiguity sequence is a trait in the case of duplicating a single letter in a word. We formulate this question formally as follows.

\begin{question}\label{QuesMotivation}
Suppose $\Sigma$ is an ordered alphabet. Let $\varphi=\{w_i\}_{i\ge 1}$ be a sequence of words over $\Sigma$ such that for every integer $k\ge 1$, we have $w_k=xa^ky$ for some $x,y\in\Sigma^*$ and $a\in\Sigma$. Is the sequence $\Theta_{\varphi}$ eventually periodic?
\end{question}

In the spirit of answering the above question, we first present a way to determine the $M$\!-ambiguity of a word---by transforming it to a problem of solving \mbox{systems} of linear equalities. To illustrate this, we analyze the word considered in Theorem~\ref{ConjectureOverturn} and deduce that it is $M$\!-unambiguous for every integer $n>1$. 

Let $\Sigma=\{a<b<c<d\}$ and consider the word $w=cbcbabc^ndcbabcbc$, where $n$ is a nonnegative integer. If a word $w'\in\Sigma^*$ is $M$\!-equivalent to $w$, then $\pi_{\{a,b,d\}}(w')\equiv_M\pi_{\{a,b,d\}}(w)$ with respect to \mbox{$\{a<b<d\}$}. Since $\pi_{\{a,b,d\}}(w)=bbabdbabb$, it follows that for such a word $w'$, the projection $\pi_{\{a,b,d\}}(w')$ must be one of the following:
\begin{equation*}\tag{$\ast\ast$}
\begin{aligned}
                      &dbbabbabb,bdbabbabb,bbdabbabb,bbadbbabb,bbabdbabb,bbabbdabb,\\
                      &bbabbadbb,bbabbabdb,bbabbabbd,dbabbbbab,bdabbbbab,badbbbbab,\\
                      &babdbbbab,babbdbbab,babbbdbab,babbbbdab,babbbbadb,babbbbabd,\\
                      &dbbbaabbb,bdbbaabbb,bbdbaabbb,bbbdaabbb,bbbadabbb,bbbaadbbb,\\
                      &bbbaabdbb,bbbaabbdb,bbbaabbbd,dabbbbbba,adbbbbbba,abdbbbbba\\
                      &abbdbbbba,abbbdbbba,abbbbdbba,abbbbbdba,abbbbbbda,abbbbbbad.
\end{aligned}
\end{equation*}

Consider the scenario $\pi_{\{a,b,d\}}(w')=\pi_{\{a,b,d\}}(w)=bbabdbabb$. Then $$w'=c^{x_1}\boldsymbol{b}c^{x_2}\boldsymbol{b}c^{x_3}\boldsymbol{a}c^{x_4}\boldsymbol{b}c^{x_5}\boldsymbol{d}c^{x_6}\boldsymbol{b}c^{x_7}\boldsymbol{a}c^{x_8}\boldsymbol{b}c^{x_9}\boldsymbol{b}c^{x_{10}}$$
for some nonnegative integers $x_i\,(1\le i\le 10)$. Since $w'\equiv_Mw$, it follows that 
\vspace{1em}\begin{center}
\setlength\tabcolsep{2pt}
\begin{tabular}{r c l c l c l}
$x_1+x_2+x_3+x_4+x_5+x_6+x_7+x_8+x_9+x_{10}$ & $=$ & $|w'|_c$ & $=$ & $|w|_c$ & $=$ & $n+5$,\\
$x_2+2x_3+2x_4+3x_5+3x_6+4x_7+4x_8+5x_9+6x_{10}$ & $=$ & $|w'|_{bc}$ & $=$ & $|w|_{bc}$ & $=$ & $3n+15$,\\
$x_1+x_2+x_3+x_4+x_5$ & $=$ & $|w'|_{cd}$ & $=$ & $|w|_{cd}$ & $=$ & $n+2$,\\
$x_5+x_6+2x_7+2x_8+4x_9+6x_{10}$ & $=$ & $|w'|_{abc}$ & $=$ & $|w|_{abc}$ & $=$ & $n+11$,\\
$x_2+2x_3+2x_4+3x_5$ & $=$ & $|w'|_{bcd}$ & $=$ & $|w|_{bcd}$ & $=$ & $3n+1$,\\
$x_5$ & $=$ & $|w'|_{abcd}$ & $=$ & $|w|_{abcd}$ & $=$ & $n$.
\end{tabular}
\end{center} 

Solving the above system of linear equalities, we obtain the solution set
\begin{equation*}
\begin{split}
x_1 & = 1+x_3+x_4, \\
x_2 & = 1-2x_3-2x_4,\\
x_5 & = n,\\
x_6 & = 1,\\
x_7 & = -1-x_8+x_{10},\\
x_9 & = 3-2x_{10}.
\end{split}
\end{equation*} 

By imposing the constraints $x_i\ge 0\,(1\le i\le 10)$, we now have the system of linear inequalities
\begin{equation*}\tag{$\ast\ast\ast$}
\begin{split}
x_3+x_4&\ge -1, \\
2x_3+2x_4&\le 1,\\
n&\ge 0,\\
x_8-x_{10}&\le -1,\\
2x_{10}&\le 3\\
x_3,x_4,x_8,x_{10}&\ge 0.
\end{split}
\end{equation*}

From the above system of linear inequalities, notice that the only possible value of $x_{10}$ is 1 and therefore $x_8=0$. Also, observe that it can only be the case that $x_3=x_4=0$. By the system of linear equations before that, it follows that $x_1=x_2=x_6=x_9=1$, $x_5=n$ and $x_7=0$. As a result, we have $w'=cbcbabc^ndcbabcbc$. However, notice that $w'=w$, thus this scenario does not imply that $w$ is \mbox{$M$\!-ambiguous} for every nonnegative \mbox{integer $n$.}

Next, consider the scenario $\pi_{\{a,b,d\}}(w')=babbbdbbab$. Analyzing similarly as above, we obtain the solution set
\begin{equation*}
\begin{split}
x_1 & = 1-n, \\
x_2 & = 1-x_3+x_5+n,\\
x_4 & = -2x_5+n,\\
x_6 & = -2+x_8+x_9,\\
x_7 & = 5-2x_8-2x_9,\\
x_{10} & = 0.
\end{split}
\end{equation*} 
and the system of linear inequalities
\begin{equation*}
\begin{split}
n&\le 1, \\
x_3-x_5&\le 1,\\
2x_5-n&\le 0,\\
x_8+x_9&\ge 2,\\
2x_8+2x_9&\le 5\\
x_3,x_5,x_8,x_9&\ge 0.
\end{split}
\end{equation*}

By some simple analysis, one can see that for $n=0$ or $n=1$, integral solutions exist for the above system---each of them gives rise to a word $w'$ that is distinct from $w$. This implies that when $n=0$ or $n=1$, the word $w$ is \mbox{$M$\!-ambiguous}. However, when $n>1$, there are no integral solutions, with such $n$, satisfying the system.

Arguing like this, one can see that each possibility of $\pi_{\{a,b,d\}}(w')$ in $(\ast\ast)$ leads to a system of linear equations and inequalities. Every such system can then be analyzed similarly as in above (thus we omit the details of the remaining computations). In our case here, when $n>1$, all the remaining $34$ systems lead to no solutions. Thus, we conclude that the word $w=cbcbabc^ndcbabcbc$ is $M$\!-unambiguous for all integers $n>1$.

\begin{remark}\label{RemForProof}
Suppose $\Sigma$ is an ordered alphabet. For a general word $$x_1a^{k}x_2a^{k}\cdots x_{j-1}a^{k}x_j$$ where $x_1,x_2,\cdots , x_j\in\Sigma^*$, $a\in\Sigma$, and $k$ is a positive integer, the above algebraic \mbox{analysis} can be used to determine the values of $k$ such that the word is \mbox{$M$\!-ambiguous}. The corresponding (finitely many) systems of linear equalities and \mbox{inequalities} are rational.
We will need this observation for the proof of Theorem~\ref{TheoEventPeriod1} later.
\end{remark}

Next, we need the following notion and known result, which in turn will be used to prove a lemma necessary for our purpose.

\begin{definition}\label{DefPoly}
Suppose $n$ is a positive integer. A set $P\subseteq\mathbb{R}^n$ is a \textit{rational polyhedron} if and only if $P=\{\boldsymbol{x}\in\mathbb{R}^n\,|\,A\boldsymbol{x}\ge \boldsymbol{b}\}$ for some matrix $A\in\mathbb{Q}^{m\times n}$ and vector $\boldsymbol{b}\in\mathbb{Q}^m$, where $m$ is a positive integer.
\end{definition}

The following result was deduced as Equation~19 in Chapter 16 of \cite{aS98}. We do not state the underlying details that lead to this result here as they are not essential for our purpose.

\begin{theorem}\cite{aS98}\label{DecompTheo}
Suppose $n$ is a positive integer. For any rational polyhedra $P\subseteq\mathbb{R}^n$, there exist vectors $\boldsymbol{x_1},\boldsymbol{x_2},\cdots ,\boldsymbol{x_r},\boldsymbol{y_1},\boldsymbol{y_2},\cdots ,\boldsymbol{y_s}\in\mathbb{Z}^n$ such that
\begin{equation*}
\begin{aligned}
\{\boldsymbol{x}\in P\cap\mathbb{Z}^n\}=&\{\lambda_1\boldsymbol{x_1}+\cdots +\lambda_r\boldsymbol{x_r}+\mu_1\boldsymbol{y_1}+\cdots +\mu_s\boldsymbol{y_s}\,|\,\lambda_1,\cdots ,\lambda_r,\mu_1,\cdots ,\mu_s \text{ are }\\
&\text{ nonnegative integers with }\lambda_1+\cdots+\lambda_r=1\}.
\end{aligned}
\end{equation*}
\end{theorem}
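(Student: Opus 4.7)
The plan is to reduce the statement to the Minkowski-Weyl representation of rational polyhedra combined with a standard fractional-part argument. First I would invoke Minkowski-Weyl to write $P = \conv(v_1, \dots, v_k) + \cone(z_1, \dots, z_\ell)$ where $v_1, \dots, v_k \in \mathbb{Q}^n$ and, after clearing denominators in the cone generators, $z_1, \dots, z_\ell \in \mathbb{Z}^n$. Set $Q := \conv(v_1, \dots, v_k)$, $C := \cone(z_1, \dots, z_\ell)$, take $s := \ell$, and let $\boldsymbol{y_j} := z_j \in \mathbb{Z}^n$ for $1 \le j \le s$.

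Next I would analyze an arbitrary $\boldsymbol{p} \in P \cap \mathbb{Z}^n$. Writing $\boldsymbol{p} = \boldsymbol{q} + \boldsymbol{c}$ with $\boldsymbol{q} \in Q$ and $\boldsymbol{c} = \sum_{j=1}^{s} \alpha_j \boldsymbol{y_j} \in C$ (so $\alpha_j \ge 0$), I would split each coefficient as $\alpha_j = \lfloor \alpha_j \rfloor + \{\alpha_j\}$ and obtain
\begin{equation*}
\boldsymbol{p} \;=\; \Bigl(\boldsymbol{q} + \sum_{j=1}^{s} \{\alpha_j\}\, \boldsymbol{y_j}\Bigr) + \sum_{j=1}^{s} \lfloor \alpha_j \rfloor\, \boldsymbol{y_j}.
\end{equation*}
The bracketed vector lies in the bounded set $B := Q + \bigl\{\sum_j \beta_j \boldsymbol{y_j} : 0 \le \beta_j \le 1\bigr\}$, and it is integral because it equals $\boldsymbol{p} - \sum_j \lfloor \alpha_j \rfloor \boldsymbol{y_j}$. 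Hence it lies in $B \cap \mathbb{Z}^n$, which is a finite set since $B$ is bounded. Enumerate these finitely many points as $\boldsymbol{x_1}, \dots, \boldsymbol{x_r}$.

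To conclude, I would verify both inclusions. For $(\subseteq)$, the display above writes every $\boldsymbol{p} \in P \cap \mathbb{Z}^n$ as $\boldsymbol{x_i} + \sum_j \mu_j \boldsymbol{y_j}$ for some $i$ and with $\mu_j := \lfloor \alpha_j \rfloor \in \mathbb{Z}_{\ge 0}$; this fits the prescribed form by setting $\lambda_i := 1$ and $\lambda_t := 0$ for $t \neq i$. For $(\supseteq)$, any prescribed combination is integral (being an integer combination of integer vectors) and lies in $P$, since $\boldsymbol{x_i} \in B \subseteq Q + C = P$ and adding nonnegative integer multiples of the cone generators keeps us inside $P$.

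The main obstacle is the invocation of Minkowski-Weyl with the rationality-preserving guarantee that the cone generators may be taken in $\mathbb{Z}^n$; this is the substantive input and is standard in the rational polyhedron literature. Once that is available, the remainder of the argument is essentially bookkeeping driven by the fractional-part decomposition. A minor subtlety worth confirming is that each enumerated $\boldsymbol{x_i}$ actually lies in $P$, which is immediate from $B \subseteq Q + C$.
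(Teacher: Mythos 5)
The paper does not actually prove this statement; it is quoted verbatim as Equation~19 of Chapter~16 in Schrijver's book, with the authors explicitly declining to reproduce the underlying argument. Your proposal supplies that missing argument, and it is correct: it is precisely the standard derivation, namely the Minkowski--Weyl decomposition $P=\conv(v_1,\dots,v_k)+\cone(z_1,\dots,z_\ell)$ with integral cone generators, followed by the fractional-part splitting $\alpha_j=\lfloor\alpha_j\rfloor+\{\alpha_j\}$ that places the ``remainder'' in the bounded set $Q+\{\sum_j\beta_j\boldsymbol{y_j}: 0\le\beta_j\le 1\}$, whose integer points form the finite list $\boldsymbol{x_1},\dots,\boldsymbol{x_r}$. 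Both inclusions are verified soundly: $B\subseteq Q+C=P$ gives the reverse inclusion, and the finiteness of $B\cap\mathbb{Z}^n$ gives the forward one. The only point worth making explicit is the degenerate case $P\cap\mathbb{Z}^n=\emptyset$ (including $P=\emptyset$), where one must take $r=0$ so that the constraint $\lambda_1+\cdots+\lambda_r=1$ is unsatisfiable and the right-hand side is empty; your construction handles this automatically since $B\subseteq P$ forces $B\cap\mathbb{Z}^n=\emptyset$ in that case, but it deserves a sentence. As you note, the substantive external input is the rational Minkowski--Weyl theorem itself; granting that, your proof is complete.
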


We are now ready to prove our main lemma.
\begin{lemma}\label{LemMain}
Suppose $n$ is a positive integer. Let $P=\{\boldsymbol{x}\in\mathbb{R}^n\,|\,A\boldsymbol{x}\ge \boldsymbol{b}\}$ for some matrix $A\in\mathbb{Q}^{m\times n}$ and vector $\boldsymbol{b}\in\mathbb{Q}^{m}$ where $m$ is a positive integer. Choose an arbitary integer $1\le k\le n$. Let 
$$P_k=\{p\in\mathbb{Z}^+\,|\, p \text{ is the $k^{th}$\! component of some } \boldsymbol{x}\in P\cap\mathbb{Z}_{\ge 0}^n\}.$$ Suppose the set $P_k$ is infinite. Then, for some positive integer $d$ and nonempty set $T\subseteq[0,d)\cap\mathbb{Z}$, there exists a positive integer $N$ such that 
$$\{p\in P_k\,|\,p\ge N\}=\{p\in\mathbb{Z}^+\,|\,p\ge N \text{ and } p=dq+t \text{ for some } t\in T \text{ and integers }q\}.$$
\end{lemma}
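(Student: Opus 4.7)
My plan is to invoke the integer decomposition Theorem~\ref{DecompTheo} on an augmented polyhedron and then reduce the periodicity claim to a classical fact about numerical semigroups. Since the nonnegativity constraints $\boldsymbol{x}\ge\boldsymbol{0}$ are rational, the set $P'=P\cap\mathbb{R}^n_{\ge 0}$ is again a rational polyhedron, and $P'\cap\mathbb{Z}^n=P\cap\mathbb{Z}^n_{\ge 0}$. Applying Theorem~\ref{DecompTheo} to $P'$ yields integer vectors $\boldsymbol{x_1},\dots,\boldsymbol{x_r},\boldsymbol{y_1},\dots,\boldsymbol{y_s}$ such that every vector in $P\cap\mathbb{Z}^n_{\ge 0}$ has the form $\boldsymbol{x_i}+\mu_1\boldsymbol{y_1}+\cdots+\mu_s\boldsymbol{y_s}$ for some index $i$ and some nonnegative integers $\mu_j$ (the constraint $\sum_i\lambda_i=1$ over nonnegative integers forces exactly one $\lambda_i=1$). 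Taking all $\mu_j=0$ shows $\boldsymbol{x_i}\in P'\subseteq\mathbb{R}^n_{\ge 0}$, and letting $\mu_j$ grow while the other $\mu$'s vanish forces $\boldsymbol{y_j}$ into the recession cone of $P'$, which is contained in $\mathbb{R}^n_{\ge 0}$; hence each $\boldsymbol{x_i}$ and each $\boldsymbol{y_j}$ has nonnegative entries.

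Next, projecting onto the $k^{th}$ coordinate, set $a_i=(\boldsymbol{x_i})_k$ and $b_j=(\boldsymbol{y_j})_k$, and let $S=\{\mu_1 b_1+\cdots+\mu_s b_s:\mu_j\in\mathbb{Z}_{\ge 0}\}$ denote the numerical submonoid of $\mathbb{Z}_{\ge 0}$ generated by the $b_j$. Then
\[
P_k=\Bigl(\bigcup_{i=1}^{r}(a_i+S)\Bigr)\cap\mathbb{Z}^+.
\]
Because $P_k$ is infinite while only finitely many $a_i$ occur, $S$ itself must be infinite, which forces $b_j>0$ for at least one $j$. Let $d=\gcd\{b_j:b_j>0\}$. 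Then $S\subseteq d\mathbb{Z}_{\ge 0}$, and by the classical Sylvester--Frobenius theorem applied to $S/d$ (whose generators have gcd $1$), there exists an integer $F$ such that every multiple of $d$ exceeding $F$ already lies in $S$.

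The proof then concludes by taking $T=\{a_i\bmod d:1\le i\le r\}$, which is a nonempty subset of $[0,d)\cap\mathbb{Z}$ since $r\ge 1$, and choosing any positive integer $N>F+\max_i a_i$. If $p\ge N$ lies in $P_k$, then $p=a_i+s$ for some $i$ and some $s\in S\subseteq d\mathbb{Z}$, hence $p\bmod d=a_i\bmod d\in T$. Conversely, if $p\ge N$ satisfies $p\bmod d=a_i\bmod d$ for some $i$, then $p-a_i$ is a positive multiple of $d$ that exceeds $F$; so $p-a_i\in S$, and therefore $p\in a_i+S\subseteq P_k$. This yields the claimed equality.

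I do not foresee any deep obstacle; the two places where care is needed are (i) ensuring the decomposition vectors $\boldsymbol{y_j}$ lie in $\mathbb{R}^n_{\ge 0}$, which is precisely why one must apply Theorem~\ref{DecompTheo} to $P'$ rather than to $P$ directly, and (ii) keeping track of residues modulo $d$ so that the chosen $N$ is large enough to simultaneously handle the shift by each $a_i$ and the Frobenius threshold for $S$.
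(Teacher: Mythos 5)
Your proof is correct, and while it starts from the same place as the paper---both arguments hinge on Theorem~\ref{DecompTheo}---it diverges in the second half in a way worth noting. The paper applies the decomposition to $P$ itself, selects a \emph{single} generator $\boldsymbol{y_I}$ with $\boldsymbol{y_I}[k]>0$, takes $d=\boldsymbol{y_I}[k]$, defines $T$ as the set of residues of elements of $P_k$ modulo $d$, and establishes periodicity by showing each residue class is closed under adding $d$ above its minimal representative $p_t^*$. You instead apply the decomposition to $P'=P\cap\mathbb{R}^n_{\ge 0}$, use \emph{all} the generators to write $P_k=\bigl(\bigcup_i(a_i+S)\bigr)\cap\mathbb{Z}^+$ for a numerical semigroup $S$, take $d=\gcd\{b_j:b_j>0\}$, and finish with the Sylvester--Frobenius gap theorem. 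Your route buys two things. First, working with $P'$ rather than $P$ cleanly guarantees that the recession directions $\boldsymbol{y_j}$ have nonnegative entries, so that translating an integer point of $P\cap\mathbb{Z}^n_{\ge 0}$ by $\boldsymbol{y_j}$ stays in $\mathbb{Z}^n_{\ge 0}$; the paper's step (4.5.2) tacitly needs this and does not address it (it is harmless in the intended application, where the nonnegativity constraints are already part of $A\boldsymbol{x}\ge\boldsymbol{b}$, but your version proves the lemma as literally stated). Second, your $d$ is generally a divisor of the paper's $d$, so you identify a finer period, at the cost of importing a classical fact about numerical semigroups that the paper avoids by its more hands-on minimal-representative argument. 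Both proofs are valid; yours is slightly more self-contained in its treatment of nonnegativity and slightly less so in its reliance on the Frobenius threshold.
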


\begin{proof}
Clearly, by Definition~\ref{DefPoly}, $P$ is a rational polyhedron. Therefore, by Theorem~\ref{DecompTheo}, there exist vectors $\boldsymbol{x_1},\boldsymbol{x_2},\cdots ,\boldsymbol{x_r},\boldsymbol{y_1},\boldsymbol{y_2},\cdots ,\boldsymbol{y_s}\in\mathbb{Z}^n$ such that
\begin{equation*}
\begin{aligned}
\{\boldsymbol{x}\in P\cap\mathbb{Z}^n\}=\{&\lambda_1\boldsymbol{x_1}+\cdots +\lambda_r\boldsymbol{x_r}+\mu_1\boldsymbol{y_1}+\cdots +\mu_s\boldsymbol{y_s}\,|\,\lambda_1,\cdots ,\lambda_r,\mu_1,\cdots ,\mu_s \text{ are }\\
&\text{nonnegative integers with }\lambda_1+\cdots+\lambda_r=1\}.
\end{aligned}
\end{equation*}

Fix an arbitary integer $1\le k\le n$. Suppose the set $P_k$ is infinite. Let $\boldsymbol{x}[i]$ denote the $i^{th}$ component of a vector $\boldsymbol{x}$. Then, for an arbitrary $p\in P_k$, it holds that
\begin{equation*}\tag{4.5.1}
\begin{aligned}
&p=\lambda_{1}\boldsymbol{x_1}[k]+\cdots +\lambda_{r}\boldsymbol{x_r}[k]+\mu_{1}\boldsymbol{y_1}[k]+\cdots +\mu_{s}\boldsymbol{y_s}[k] \text{ for some}\\
&\text{nonnegative integers }\lambda_{1},\cdots ,\lambda_{r},\mu_{1},\cdots ,\mu_{s}\text{ with }\lambda_{1}+\cdots+\lambda_{r}=1.
\end{aligned}
\end{equation*}

Assume $\boldsymbol{y_{i}}[k]$ is nonpositive for all integers $1\le i\le s$. Since $p$ is a positive integer and $\lambda_{1},\cdots ,\lambda_{r}$ are nonnegative integers with $\lambda_{1}+\cdots+\lambda_{r}=1$, it holds that $0<p\le \max\{\boldsymbol{x_i}[k]\,|\,1\le i\le r\}$. However, such values of integers $p$ are only finitely many, which is a contradiction as the set $P_k$ is infinite. Thus $\boldsymbol{y_{i}}[k]$ is positive for some integers $1\le i\le s$.

Choose an integer $I$ such that $\boldsymbol{y_{I}}[k]$ is positive. Let 
\begin{equation*}
T=\{t\in[0,\boldsymbol{y_{I}}[k])\cap\mathbb{Z}\,|\,t=p-\boldsymbol{y_{I}}[k]\cdot q \text{ for some }p\in P_k \text{ and integer } q\}.
\end{equation*}
For every $t\in T$, let
\begin{equation*}
p_t^*=\min\{p\in P_k\,|\,p=\boldsymbol{y_{I}}[k]\cdot q+t \text{ for some integer }q\}.
\end{equation*}
Then, by (4.5.1), it follows that
\begin{gather*}\tag{4.5.2}
\text{for every $t\in T$ and integer $j\ge 0$, we have  $p_t^*+j\cdot\boldsymbol{y_{I}}[k]\in P_k$.}
\end{gather*}

Let $N=\max\{p_t^*\,|\,t\in T\}$ and $d=\boldsymbol{y_{I}}[k]$. Then the forward inclusion clearly holds by the definition of $N,d$ and $T$. To show that the backward inclusion holds, fix an arbitrary $p\in\mathbb{Z}^+$ with $p\ge N$ such that $p=\boldsymbol{y_{I}}[k]\cdot q+t$ for some $t\in T$ and integer $q$.
Let $q^*_t$ be the integer such that $p^*_{t}=\boldsymbol{y_{I}}[k]\cdot q^*_{t}+t$. Note that $p^*_{t}\le N\le p$, thus $q^*_{t}\le q$. Notice that
\begin{equation*}
\begin{aligned}
 p&=\boldsymbol{y_{I}}[k]\cdot q+t\\
  &=\boldsymbol{y_{I}}[k]\cdot (q+q_t^*-q_t^*)+t\\
  &=\boldsymbol{y_{I}}[k]\cdot (q^*_{t}+(q-q_t^*))+t\\
  &=\boldsymbol{y_{I}}[k]\cdot q^*_{t} + t + \boldsymbol{y_{I}}[k]\cdot (q-q_t^*)\\
  &=p_t^*+\boldsymbol{y_{I}}[k]\cdot (q-q_t^*).
\end{aligned}
\end{equation*}
It remains to see that since $q-q^*_{t}\ge 0$, by (4.5.2), it holds that $p\in P_k$. Thus our conclusion holds.
\end{proof}

\begin{theorem}\label{TheoEventPeriod1}
Suppose $\Sigma$ is an ordered alphabet. Let $\varphi=\{w_k\}_{k\ge 1}$ be a sequence of words over $\Sigma$ such that for every integer $k\ge 1$, we have $$w_k=x_1a^kx_2a^k\cdots x_{j-1}a^kx_j$$ for some $x_1,x_2,\cdots x_j\in\Sigma^*$ and $a\in\Sigma$. Then, $\Theta_\varphi$ is eventually periodic.
\end{theorem}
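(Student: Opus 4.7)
The plan is to make the algebraic reduction of Remark~\ref{RemForProof} quantitative and combine it with Lemma~\ref{LemMain}. Specifically, I would reformulate ``$w_k$ is $M$-ambiguous'' as the feasibility of an integer point in at least one of finitely many rational polyhedra whose defining data is affine in the parameter $k$. Lemma~\ref{LemMain}, applied with the distinguished coordinate taken to be $k$, will then force the set of $k$ at which each individual polyhedron is feasible to be eventually periodic; a finite union of eventually periodic subsets of $\mathbb{Z}^+$ is eventually periodic (take the LCM of the periods and union residue classes), so $\Theta_\varphi$ will be eventually periodic.

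For the construction, I would argue as follows. Any $w' \equiv_M w_k$ satisfies $\pi_{\Sigma\setminus\{a\}}(w') \equiv_M \pi_{\Sigma\setminus\{a\}}(w_k)$; the right-hand side lives in a strictly smaller alphabet and is independent of $k$, so $\pi_{\Sigma\setminus\{a\}}(w')$ ranges over a finite set $\{\tau_1, \dotsc, \tau_L\}$. Fixing $\tau_i$ and writing $w' = a^{y_0}\sigma_1 a^{y_1}\sigma_2 \dotsm \sigma_J a^{y_J}$ with $\sigma_1\dotsm\sigma_J = \tau_i$, the condition $w' \equiv_M w_k$ reduces by Theorem~\ref{1206a} to $|w'|_u = |w_k|_u$ for every subword $u = a_p a_{p+1} \dotsm a_q$. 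The key structural observation is that each such $u$ contains the duplicated letter $a$ \emph{at most once}; consequently $|w'|_u$ is an integer-coefficient linear functional of $(y_0, \dotsc, y_J)$ and $|w_k|_u$ is an integer-coefficient affine function of $k$. These equalities, together with $\boldsymbol{y}\geq\boldsymbol{0}$ and $k\geq 1$, carve out a rational polyhedron $Q_i \subseteq \mathbb{R}^{J+2}$ in the coordinates $(\boldsymbol{y}, k)$.

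In the identity case $\tau_i = \pi_{\Sigma\setminus\{a\}}(w_k)$, the tautological choice $w'=w_k$ corresponds to a point $\boldsymbol{y}^*(k)$ whose components have the form $\alpha_r + \beta_r k$ for nonnegative integers $\alpha_r, \beta_r$, and lies in $Q_i$ for every $k$; this ``trivial curve'' must be excluded. On integer points, the complement of this curve in $Q_i$ is covered by the finitely many rational polyhedra obtained by intersecting $Q_i$ with a half-space of the form $y_r \geq y_r^*(k)+1$ or $y_r \leq y_r^*(k)-1$ for some $0 \leq r \leq J$. Lemma~\ref{LemMain} is then applied to each polyhedron in the resulting finite list (the non-identity $Q_i$'s together with the shifted polyhedra from the identity case), yielding an eventually periodic set of admissible $k$-values per polyhedron; their union is the set $\{k\in\mathbb{Z}^+ : w_k \text{ is $M$-ambiguous}\}$, which is therefore eventually periodic.

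The main obstacle I anticipate is cleanly certifying that all constraints are linear in $\boldsymbol{y}$ and $k$ simultaneously, rather than higher-degree polynomial. This hinges on the fact that the subwords indexing entries of a Parikh matrix are precisely the contiguous intervals $a_p\dotsm a_q$ in the ordered alphabet, each of which meets the duplicated letter $a$ at most once; otherwise expressions such as $\binom{y_r}{2}$ or $k^2$ terms would appear and Lemma~\ref{LemMain} would no longer apply directly. Excluding the trivial curve in the identity case is a secondary technical point, handled cleanly because $\boldsymbol{y}^*(k)$ is affine in $k$ with integer coefficients, so the inequality $\boldsymbol{y}\neq\boldsymbol{y}^*(k)$ decomposes over the integers into finitely many rational half-space conditions.
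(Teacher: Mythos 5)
Your proposal is correct and follows essentially the same route as the paper's own proof: a finite case analysis over the possible projections $\pi_{\Sigma\setminus\{a\}}(w')$, a rational polyhedron in the exponent variables and $k$ for each case (with the trivial solution in the identity case excluded by finitely many shifted half-space conditions), an application of Lemma~\ref{LemMain} with $k$ as the distinguished coordinate, and the closure of eventually periodic subsets of $\mathbb{Z}^+$ under finite unions. Your explicit justification that each row-indexing subword $a_p a_{p+1}\dotsm a_q$ meets the duplicated letter at most once, forcing linearity of all constraints, is a point the paper leaves implicit but is exactly the reason its systems are linear.
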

\begin{proof}
In Remark~\ref{RemForProof}, we observe that for a word $w_k=x_1a^kx_2a^k\cdots x_{j-1}a^kx_j$ (as in the hypothesis) where $k$ is a positive integer, the algebraic analysis presented in the beginning of this section can be used to determine the values of $k$ such that $w_k$ is $M$\!-ambiguous. For the completeness of this proof, we will reiterate certain parts of the aforementioned analysis.

Let $\Gamma=\Sigma\backslash\{a\}$. Write $w_k$ in the form $a^{\gamma_1}\beta_1a^{\gamma_2}\beta_2\cdots a^{\gamma_{n}}\beta_{n}a^{\gamma_{n+1}}$ for some positive integer $n$, integers $\gamma_i\ge 0\,(1\le i\le n+1)$ and $\beta_i\in\Gamma \,(1\le i\le n)$. Note that $\beta_1\beta_2\cdots\beta_n=\pi_{\Gamma}(w_k)$.
Suppose there exists $w'\in\Sigma^*$ such that $w'\equiv_Mw_k$. Then 
$\pi_{\Gamma}(w')\equiv_M\pi_{\Gamma}(w_k)$. Each possibility of the projection $\pi_{\Gamma}(w')$ gives rise to a rational system of linear inequalities as in $(\ast\ast\ast)$, with $k$ being a variable in it (due to the constraint $k\ge 1$). Each such system, when solved for nonnegative integral solutions, contains the values of $k$ such that $w_k$ is $M$\!-equivalent to $w'$ with that projection. 
 
Assume $\pi_{\Gamma}(w')=\pi_{\Gamma}(w_k)=\beta_1\beta_2\cdots\beta_n$. Then $w'=a^{y_1}\beta_1a^{y_2}\beta_2\cdots a^{y_{n}}\beta_{n}a^{y_{n+1}}$ for some integers $y_i\ge 0\,(1\le i\le n+1)$. If $y_i=\gamma_i$ for every integer $1\le i\le n+1$, then $w'=w$. To avoid this, we impose the condition $y_i<\gamma_i$ or $y_i>\gamma_i$ for some integer $1\le i\le n+1$. Thus for every integer $1\le i\le n+1$, we consider two distinct systems of linear inequalities, each of them consisting of the ones obtained as in $(\ast\ast\ast)$, together with one of the conditions $y_i<\gamma_i$ or $y_i>\gamma_i$
---this gives a total of $2(n+1)$ systems of linear inequalities.
On the other hand, if $\pi_{\Gamma}(w')\neq\pi_{\Gamma}(w_k)$, then it is impossible for $w'$ to be the same word as $w$. Thus, for each such possibility of $\pi_{\Gamma}(w')$, it suffices to consider the system of linear inequalities obtained as in $(\ast\ast\ast)$
---this gives a total of $|C_{\pi_{\Gamma}(w_k)}-1|$ systems.

Let $N=2(n+1)+|C_{\pi_{\Gamma}(w_k)}-1|$. Let integers $1\le i\le N$ enumerate the systems of linear inequalities that we have and write each of them in the form $A_i\boldsymbol{y}\ge \boldsymbol{b_i}$ for some matrix $A_i\in\mathbb{Q}^{r\times s}$ and vector $\boldsymbol{b_i}\in\mathbb{Q}^{r}$, where $r$ is a positive integer and $s=n+2$.
For every integer $1\le i\le N$, let $P_i=\{\boldsymbol{y}\in\mathbb{R}^q\,|\,A_i\boldsymbol{y}\ge \boldsymbol{b_i}\}$ and let $\tau_i$ be the index such that the ${\tau_i}^{th}$ component of $\boldsymbol{y}$ corresponds to the variable $k$. Also, for every integer $1\le i\le N$, define the set
$$P_i^*=\{p\in\mathbb{Z}^+\,|\, p \text{ is the ${\tau_i}^{th}$\! component of some } \boldsymbol{y}\in P_i\cap\mathbb{Z}_{\ge 0}^q\}.$$
Notice that 
\begin{equation}\tag{4.6.1}
\text{the word } w_k \text{ is M\!-ambiguous if and only if } k\in\underset{1\le i\le N}{\bigcup} P_i^*.
\end{equation}

\begin{case1}The set $P_i^*$ is finite for every integer $1\le i\le N$.\\
Then the set $\underset{1\le i\le N}{\bigcup} P_i^*$ is finite as well. By (4.6.1), the word $w_k$ is $M$\!-ambiguous for only finitely many values of $k$.\! For every integer $k>\!\max\{k\,|\,w_k \text{ is $M$\!-ambiguous}\}$, the word $w_k$ is $M$\!-unambiguous. Therefore, $\Theta_\varphi$ is eventually periodic (with its period being one).
\end{case1}

\begin{case1}The set $P_i^*$ is infinite for some integer $1\le i\le N$.\\
Let $I=\{1\le i\le N\,|\,\text{the set }P_i^* \text{ is infinite}\}$. For every integer $i\in I$, by Lemma~\ref{LemMain}, it follows that for some positive integer $d_i$ and nonempty set $T_i\subseteq[0,d_i)\cap\mathbb{Z}$, there exists a positive integer $M_i$ such that 
$$\{p\in P_i^*\,|\,p\ge M_i\}\!=\{p\in\mathbb{Z}^+\,|\,p\ge M_i \text{ and } p=d_iq+t \text{ for some } t\in T_i \text{ and integer }q\}.$$
Let $M'=\max(\{M_i\,|\,i\in I\}\cup\{p\in P_i^*\,|\,P_i^* \text{ is finite}\})$.
%then for every integer $i\in I$, we have 
%$$\{p\in P_i^*\,|\,p\ge M'\}\!=\{p\in\mathbb{Z}^+|\,p\ge M' \text{ and } p=d_iq+t \text{ for some integers }q \text{ and }t\in T_i\}.$$
Then, by (4.6.1), it follows that
\begin{equation}\tag{4.6.2}
\begin{aligned}
&\text{for every integer } k\ge M', \text{ the word } w_k \text{ is } M\!\text{-ambiguous if and only if}\\ 
&\text{there exists } i\in I \text{ such that } k=d_iq+t \text{ for some } t\in T_i \text{ and integer } q.
\end{aligned}
\end{equation}

Let $d'=\underset{i\in I}{\prod} d$. By some simple argument, one can see that for any $i\in I$ and integer $k$, we have  $k=d_iq+t$ for some $t\in T_i$ and integer $q$ if and only if $k+d'=d_iq+t$ for some $t\in T_i$ and integer $q$. 
Therefore, by (4.6.2), it holds that for every integer $k\ge M'$, the $M$\!-ambiguity of the words $w_{k+d'}$ and $w_k$ are the same. That is to say, the sequence $\Theta_\varphi$ is eventually periodic.
\end{case1}
In both cases, our conclusion holds.
\end{proof}

Finally, the following generalization holds as a consequence of the above theorem.
\begin{corollary}
Suppose $\Sigma$ is an ordered alphabet. Let $\varphi=\{w_n\}_{n\ge 0}$ be a sequence of words over $\Sigma$ such that for every integer $n\ge 0$, we have $$w_n=x_1a^{k^{(1)}_n}\!x_2a^{k^{(2)}_n}\cdots x_{j}a^{k^{(j)}_n}\!x_{j+1}$$ for some $x_1,x_2,\cdots x_{j+1}\in\Sigma^*$ and $a\in\Sigma$ where
\begin{itemize}[leftmargin=5.5mm]
\item $k^{(i)}_0=1$ for every integer $1\le i\le j$;
\item for every integer $1\le i\le j$, let $e_i$ denote the $j$-tuple with $1$ in the $i^{th}$ coordinate and $0$ elsewhere, and for every integer $n\ge 1$, let $\alpha_n\in\{e_i\,|\,1\le i\le j\}$ and
$$(k^{(1)}_n,k^{(2)}_n,\cdots, k^{(j)}_n)=(k^{(1)}_{n-1},k^{(2)}_{n-1},\cdots, k^{(j)}_{n-1})+\alpha_n;$$
\end{itemize}
If the sequence $\{\alpha_n\}_{n\ge 1}$ is periodic, then the sequence $\Theta_\varphi$ is eventually periodic.
\end{corollary}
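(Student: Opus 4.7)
The plan is to reduce the corollary to Theorem~\ref{TheoEventPeriod1} by decomposing $\varphi$ into finitely many interleaved subsequences on each of which all $a$-block exponents grow linearly in a single parameter, and then running the same polyhedral argument.

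First, let $p$ be a period of $\{\alpha_n\}_{n\ge 1}$ and, for each $1\le i\le j$, set $c_i=|\{1\le \ell\le p\mid \alpha_\ell=e_i\}|$. A short induction on $m$ gives
$$(k^{(1)}_{r+mp},\ldots,k^{(j)}_{r+mp}) = (k^{(1)}_r + mc_1,\ldots,k^{(j)}_r+mc_j)$$
for every $0\le r<p$ and $m\ge 0$, so the subsequence $\varphi_r=\{w_{r+mp}\}_{m\ge 0}$ consists of the words
$$w_{r+mp}=x_1\, a^{k^{(1)}_r+mc_1}\, x_2\cdots x_j\, a^{k^{(j)}_r+mc_j}\, x_{j+1},$$
in which the only free parameter is $m$.

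Next I would replay the proof of Theorem~\ref{TheoEventPeriod1} on each $\varphi_r$ separately. Write $w_{r+mp}$ in the unpacked form $a^{\gamma_1}\beta_1 a^{\gamma_2}\beta_2\cdots\beta_n a^{\gamma_{n+1}}$ with $\beta_\ell\in\Sigma\setminus\{a\}$, where each $\gamma_\ell$ is either a constant (arising from the fixed pieces $x_1,\ldots,x_{j+1}$) or an affine function $k^{(i)}_r+mc_i$ of $m$. Exactly as in the proof of Theorem~\ref{TheoEventPeriod1}, for each candidate projection $\pi_{\Sigma\setminus\{a\}}(w')$ I parametrize $w'=a^{y_1}\beta_1\cdots a^{y_{n+1}}$ and impose $\Psi_\Sigma(w')=\Psi_\Sigma(w_{r+mp})$. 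Because every Parikh-matrix subword $a_i a_{i+1}\cdots a_\ell$ contains the letter $a$ at most once, each count $|w_{r+mp}|_v$ is affine in the block lengths, hence affine in $m$; together with the nonnegativity constraints $y_\ell\ge 0$, $m\ge 0$ and the strict inequalities $y_\ell<\gamma_\ell$ or $y_\ell>\gamma_\ell$ (used, as in the proof of Theorem~\ref{TheoEventPeriod1}, when the projections coincide in order to force $w'\ne w_{r+mp}$), this yields finitely many rational polyhedra in the variables $(y_1,\ldots,y_{n+1},m)$. Applying Lemma~\ref{LemMain} to the $m$-coordinate of each such polyhedron, I obtain integers $M_r\ge 0$ and $d_r\ge 1$ such that the set of $m\ge 0$ with $w_{r+mp}$ \emph{$M$-ambiguous} is, above $M_r$, periodic in $m$ with period $d_r$.

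Finally I would assemble the $p$ subsequences. Set $D=p\cdot\mathrm{lcm}(d_0,\ldots,d_{p-1})$ and $N\ge\max_{0\le r<p}(r+pM_r)$. For any $n\ge N$, write $n=r+mp$; then $n+D=r+(m+D/p)p$ lies in the same subsequence $\varphi_r$ and differs from $n$ in its $m$-index by a multiple of $d_r$, so $w_n$ and $w_{n+D}$ have the same $M$-ambiguity. Hence $\Theta_\varphi$ is eventually periodic with period dividing $D$.

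The main obstacle I anticipate is verifying that the systems really are linear (not merely polynomial) in $(y_1,\ldots,y_{n+1},m)$ when several $a$-blocks grow simultaneously with $m$; this reduces to the already-noted observation that Parikh-matrix subwords are strictly increasing in the alphabet order and so contain each letter at most once, keeping all counts $|w_{r+mp}|_v$ affine in the block lengths. With linearity secured, Lemma~\ref{LemMain} applies verbatim and the remainder of the argument mirrors Theorem~\ref{TheoEventPeriod1}.
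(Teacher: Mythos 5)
Your proposal is correct and follows the same overall strategy as the paper's proof: decompose $\varphi$ into the $p$ interleaved subsequences $\varphi_r=\{w_{r+mp}\}_{m\ge 0}$, observe that the exponents grow affinely in $m$, establish eventual periodicity of each $\Theta_{\varphi_r}$, and recombine with period $p\cdot\operatorname{lcm}(d_0,\dots,d_{p-1})$ (the paper takes the product of the subperiods rather than the lcm, which is immaterial). The one place you diverge is in how each subsequence is handled. You replay the polyhedral argument of Theorem~\ref{TheoEventPeriod1} with block lengths affine in $m$ and invoke Lemma~\ref{LemMain} on the $m$-coordinate directly; this works, and the linearity worry you flag as your main obstacle is resolved exactly as you say, since every Parikh-matrix entry counts a subword $a_ia_{i+1}\cdots a_\ell$ in which $a$ occurs at most once, so all counts are affine in the block lengths. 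The paper instead avoids reopening that proof by a small rewriting trick: since $k^{(i)}_{r+mp}=d_im+\mu_{r,i}+1$, each block $a^{k^{(i)}_{r+mp}}$ factors as $\underbrace{a^m\cdots a^m}_{d_i}\,a^{\mu_{r,i}}a$, so after absorbing the constant tails into new fixed words the subsequence $\varphi_r$ is literally of the form $x'_1a^mx'_2a^m\cdots$ demanded by Theorem~\ref{TheoEventPeriod1}, which then applies as a black box. Your version is marginally more general (it handles affine exponents without reducing them to equal ones), while the paper's is shorter and keeps the polyhedral machinery confined to a single proof; both are sound.
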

\begin{proof}
Suppose the sequence $\{\alpha_n\}_{n\ge 1}$ is periodic, with a period $p$. Then for all integers $1\le n\le p$ and $m\ge 0$, we have $\alpha_{n+mp}=\alpha_n$. Let integers $d_i,(1\le i\le j)$ be such that $(d_1,d_2,\cdots ,d_j)=\sum\limits_{n=1}^{p}\alpha_n$. Next, we need the following observation.
(The validity of the following claim can be easily verified by the reader, thus we omit its technical proof.)
\begin{claim}\label{Claim1}
For every integer $1\le n\le p$, let $\alpha^*_n$ be the $j$-tuple such that $\alpha^*_n=\sum\limits_{i=1}^{n}\alpha_i$ (the addition of tuples is defined element-wise). For all integers $1\le n\le p$ and $1\le i\le j$, let $\mu_{n,i}$ be the value in the $i^{th}$ coordinate of $\alpha^*_n$. Then, for all integers $1\le n\le p$, $1\le i\le j$ and $m\ge 0$, we have $k^{(i)}_{n+mp}=d_im+\mu_{n,i}+1$.
\end{claim}
For all integers $1\le n\le p$ and $m\ge 0$, we have
\begin{equation}\tag{4.9.1}
\begin{aligned}
w_{n+mp}&=x_1a^{k^{(1)}_{n+mp}}x_2a^{k^{(2)}_{n+mp}}\cdots x_{j}a^{k^{(j)}_{n+mp}}x_{j+1}\\
        &=x_1a^{d_1m+\mu_{n,1}+1}x_2a^{d_2m+\mu_{n,2}+1}\cdots x_{j}a^{d_jm+\mu_{n,j}+1}x_{j+1}\\
        &=x_1\underbrace{a^m\cdots a^m}_\text{$d_1$ times}a^{\mu_{n,1}}ax_2\underbrace{a^m\cdots a^m}_\text{$d_2$ times}a^{\mu_{n,2}}a\cdots x_{j}\underbrace{a^m\cdots a^m}_\text{$d_j$ times}a^{\mu_{n,j}}ax_{j+1}
\end{aligned}
\end{equation}
where the second equality holds by Claim~\ref{Claim1}. 

For all integers $0\le n<p$, define the sequence of words $\varphi_n=\{w_{n+mp}\}_{m\ge 0}$. Then, for every integer $0\le n<p$, it follows by $(4.9.1)$ and Theorem~\ref{TheoEventPeriod1} that the corresponding $M$\!-ambiguity sequence $\Theta_{\varphi_n}=\{\theta_{n,t}\}_{t\ge 0}$ is eventually periodic. That is to say, for every integer $0\le n<p$, there exists positive integers $T_n$ and $P_n$ such that for all integers $t\ge T_n$ and $m\ge 0$, we have $\theta_{n,t+mP_n}=\theta_{n,t}$.
Let $T=\max\{T_n\,|\,0\le n<p\}$, then clearly 
\begin{equation}\tag{4.9.2}
\text{for all integers } 0\le n<p,\, t\ge T \text{ and } m\ge 0,\text{ we have } \theta_{n,t+mP_n}=\theta_{n,t}.
\end{equation}

Let $P=p\,\cdot\prod\limits_{n=1}^{p}P_n$. To see that the sequence $\Theta_\varphi=\{\vartheta_t\}_{t\ge 0}$ is eventually periodic, we show that for every integer $t\ge T$, we have $\vartheta_{t+P}=\vartheta_t$. Fix an arbitrary integer $t\ge T$. Let integers $q$ and $0\le r<p$ be such that $t=pq+r$. Then, it can be verified that $\vartheta_t=\theta_{r,q}$, and therefore $\vartheta_{t+P}=\theta_{r,q+\frac{P}{p}}$. It remains to see that since $\frac{P}{p}=\prod\limits_{n=1}^{p}P_n$, it follows by $(4.9.2)$ that $\vartheta_{t+P}=\theta_{r,q+\frac{P}{p}}=\theta_{r,q}=\vartheta_{t}$. Thus our conclusion holds.
\end{proof}

\section{Conclusion}
Unlike the case of binary and ternary alphabets, for larger alphabets, duplication of letters in a word can continuously alter the $M$\!-ambiguity of the resulting words. In fact, by using the main observations and results in Section 3, we have seen that nearly any pattern of $M$\!-ambiguity sequence is attainable. 

As implied in Remark~\ref{RemDrawback}, we are yet to find a print word such that selective repeated duplications of letters in that word could give rise to arbitrary \mbox{$M$\!-ambiguity} sequences starting with the term $U$.
We believe that by further investigation, this would be achievable as well. However, we leave it as an open problem.

The final result in Section 4 shows that repeated duplications of letters of the same type in a word, when done in a periodic manner, give rise to a periodic \mbox{$M$\!-ambiguity} sequence. It remains to see if periodic duplications of different types of letters in a word would lead to the same conclusion. The main complexity would be that the associated systems consist of nonlinear equations and inequalities.

\section*{Acknowledgement}
The authors gratefully acknowledge support for this research by a Research University Grant No.~1001/PMATHS/8011019 of Universiti Sains Malaysia. This study is an extension of the work in \cite{GT17e}.

\bibliographystyle{abbrv}

\begin{thebibliography}{10}

\bibitem{aA07}
A.~Atanasiu.
\newblock Binary amiable words.
\newblock {\em Internat. J. Found. Comput. Sci.}, 18(2):387--400, 2007.

\bibitem{AAP08}
A.~Atanasiu, R.~Atanasiu, and I.~Petre.
\newblock Parikh matrices and amiable words.
\newblock {\em Theoret. Comput. Sci.}, 390(1):102--109, 2008.

\bibitem{AT16}
A.~Atanasiu and W.~C. Teh.
\newblock A new operator over {P}arikh languages.
\newblock {\em Internat. J. Found. Comput. Sci.}, 27(06):757--769, 2016.

\bibitem{MBS17}
K.~Mahalingam, S.~Bera, and K.~G. Subramanian.
\newblock Properties of {P}arikh matrices of words obtained by an extension of
  a restricted shuffle operator.
\newblock {\em Internat. J. Found. Comput. Sci.}, 29(3):403--3413, 2018.

\bibitem{MS04}
A.~Mateescu and A.~Salomaa.
\newblock Matrix indicators for subword occurrences and ambiguity.
\newblock {\em Internat. J. Found. Comput. Sci.}, 15(2):277--292, 2004.

\bibitem{MSSY01}
A.~Mateescu, A.~Salomaa, K.~Salomaa, and S.~Yu.
\newblock A sharpening of the {P}arikh mapping.
\newblock {\em Theor. Inform. Appl.}, 35(6):551--564, 2001.

\bibitem{rP66}
R.~J. Parikh.
\newblock On context-free languages.
\newblock {\em J. Assoc. Comput. Mach.}, 13:570--581, 1966.

\bibitem{GT17}
G.~Poovanandran and W.~C. Teh.
\newblock Strong $2\cdot t$ and strong $3\cdot t$ transformations for strong
  {M}-equivalence.
\newblock {\em Internat. J. Found. Comput. Sci. (In press)}.

\bibitem{GT17c}
G.~Poovanandran and W.~C. Teh.
\newblock Elementary matrix equivalence and core transformation graphs for
  {P}arikh matrices.
\newblock {\em Discrete Appl. Math.}, 251:276--289, 2018.

\bibitem{GT16a}
G.~Poovanandran and W.~C. Teh.
\newblock On {M}-equivalence and strong {M}-equivalence for {P}arikh matrices.
\newblock {\em Internat. J. Found. Comput. Sci.}, 29(01):123--137, 2018.

\bibitem{GT17e}
G.~Poovanandran and W.~C. Teh.
\newblock Parikh matrices and {M}-ambiguity sequence.
\newblock {\em J. Phys.: Conf. Ser.}, 1132:012012, 2018.

\bibitem{aS10}
A.~Salomaa.
\newblock Criteria for the matrix equivalence of words.
\newblock {\em Theoret. Comput. Sci.}, 411(16): 1818--1827, 2010.

\bibitem{SY10}
A.~Salomaa and S.~Yu.
\newblock Subword occurrences, {P}arikh matrices and {L}yndon images.
\newblock {\em Internat. J. Found. Comput. Sci.}, 21(1):91--111, 2010.

\bibitem{aS98}
A.~Schrijver.
\newblock {\em Theory of linear and integer programming}.
\newblock John Wiley \& Sons, 1998.

\bibitem{vS09}
V.~N. {\c{S}}erb{\u{a}}nu{\c{t}}{\u{a}}.
\newblock On {P}arikh matrices, ambiguity, and prints.
\newblock {\em Internat. J. Found. Comput. Sci.}, 20(1):151--165, 2009.

\bibitem{SS06}
V.~N. {\c{S}}erb{\u{a}}nu{\c{t}}{\u{a}} and T.~F.
  {\c{S}}erb{\u{a}}nu{\c{t}}{\u{a}}.
\newblock Injectivity of the {P}arikh matrix mappings revisited.
\newblock {\em Fund. Inform.}, 73(1):265--283, 2006.

\bibitem{wT16b}
W.~C. Teh.
\newblock {P}arikh matrices and {P}arikh rewriting systems.
\newblock {\em Fund. Inform.}, 146:305--320, 2016.

\bibitem{wT16}
W.~C. Teh and A.~Atanasiu.
\newblock On a conjecture about {P}arikh matrices.
\newblock {\em Theoret. Comput. Sci.}, 628:30--39, 2016.

\bibitem{GT17b}
W.~C. Teh, A.~Atanasiu, and G.~Poovanandran.
\newblock On strongly {M}-unambiguous prints and
  {{\c{S}}erb{\u{a}}nu{\c{t}}{\u{a}}}'s conjecture for {P}arikh matrices.
\newblock {\em Theoret. Comput. Sci.}, 719:86--93, 2018.

\bibitem{TK14}
W.~C. Teh and K.~H. Kwa.
\newblock Core words and {P}arikh matrices.
\newblock {\em Theoret. Comput. Sci.}, 582:60--69, 2015.

\bibitem{TSB18}
W.~C. Teh, K.~G. Subramanian, and S.~Bera.
\newblock Order of weak {M}-relation and {P}arikh matrices.
\newblock {\em Theoret. Comput. Sci.}, 743:83--92, 2018.

\end{thebibliography}

\end{document}